\documentclass{jams-l}

\usepackage{amssymb}

\usepackage{dsfont}

\usepackage{mathabx}

\usepackage{graphicx}

\usepackage{hyperref}

\usepackage{xcolor}

\usepackage{mathbbol}

\usepackage{array} 

\usepackage{float}

\usepackage[autostyle]{csquotes} 

\newtheorem{theorem}{Theorem}[section]

\newtheorem{proposition}[theorem]{Proposition}
\newtheorem{lemma}[theorem]{Lemma}

\theoremstyle{definition}

\theoremstyle{remark}
\newtheorem{remark}[theorem]{Remark}

\numberwithin{equation}{section}

\DeclareSymbolFontAlphabet{\amsmathbb}{AMSb}%

\usepackage{tikz}

\parindent 0cm
\parskip .2cm

\usetikzlibrary{arrows,positioning,calc} 
\usetikzlibrary{decorations.pathreplacing}
\tikzset{
    >=stealth',
    punkt/.style={
           rectangle,
           rounded corners,
           draw=black, thick,
           text width=5.5em,
           minimum height=2em,
           text centered},
    punktl/.style={
           rectangle,
           rounded corners,
           draw=black, thick,
           text width=7em,
           minimum height=2em,
           text centered},
    pil/.style={
           ->,
           shorten <=4pt,
       shorten >=4pt
    },
    pildotted/.style={
           ->,
           shorten <=4pt,
           shorten >=4pt,
  dotted,
  },
    punktf/.style={
           rectangle,
           text width=4.0em,
           minimum height=1.5em,
           text centered},
    punktfTop/.style={
           rectangle,
           text width=4.0em,
           minimum height=1.5em,
           text centered,
           append after command={
               [thick,shorten >=0.2bp, shorten <=0.2bp]
               (\tikzlastnode.north west)edge(\tikzlastnode.north east)
}
    },
    punktfBot/.style={
           rectangle,
           text width=4.0em,
           minimum height=1.5em,
           text centered,
           append after command={
               [thick,shorten >=0.2bp, shorten <=0.2bp]
               (\tikzlastnode.south west)edge(\tikzlastnode.south east)
            }
    }
}

\usepackage{prodint}
\theoremstyle{plain}

\begin{document}

\title[Non-parametric estimators of scaled cash flows]{Non-parametric estimators of scaled cash flows}


\author{Theis Bathke}
\address{Institut für Mathematik, Fakultät V, Carl von Ossietzky Universität Oldenburg, Ammerländer Heerstraße 114-118, DE-26129 Oldenburg, Germany}
\curraddr{}
\thanks{}

\author{Christian Furrer}
\address{Department of Mathematical Sciences, University of Copenhagen, Universitetsparken 5, DK-2100 Copenhagen, Denmark}
\curraddr{}
\email{\href{mailto:furrer@math.ku.dk}{furrer@math.ku.dk}}
\thanks{}


\date{}

\dedicatory{}

\begin{abstract}

In multi-state life insurance, incidental policyholder behavior gives rise to expected cash flows that are not easily targeted by classic non-parametric estimators if data is subject to sampling effects. We introduce a scaled version of the classic Aalen--Johansen estimator that overcomes this challenge. Strong uniform consistency and asymptotic normality are established under entirely random right-censoring, subject to lax moment conditions on the multivariate counting process. In a simulation study, the estimator outperforms earlier proposals from the literature. Finally, we showcase the potential of the presented method to other areas of actuarial science. \\

\noindent\textbf{Keywords:} Aalen--Johansen; forward rates; incidental policyholder behavior; life insurance; non-Markov model.

\end{abstract}

\maketitle


\section{Introduction}\label{sec:intro}

In the mathematics of life insurance, multi-state modeling has been a canonical approach for more than half a {century}, owing back to the unification based on Markov chains in~\cite{Hoem1969}. Today, popular models in both theory and practice include Markov chain models~\cite{Norberg1991,MilbrodtStracke1997} and semi-Markov models~\cite{Hoem1972,JanssenDominicis1984,Helwich2008,Christiansen2012,BuchardtMollerSchmidt2015}, but also more sophisticated alternatives such as doubly stochastic and aggregate models~\cite{Buchardt2017,AhmadBladtFurrer2023} have found some use. Until recently, the associated actuarial literature focused primarily on valuation techniques and computational aspects, while statistical methodology played a somewhat lesser role.

Estimation in multi-state models is, generally speaking, a challenging undertaking, not least due to sampling effects such as right-censoring. In Markov models, the celebrated Nelson--Aalen and Aalen--Johansen estimators~\cite{Aalen1978,AalenJohansen1978} may be applied directly for non-parametric predictive purposes or instead for model diagnostics and performance assessment of (semi-)parametric alternatives. Recently, the idea of fully non-Markov modeling has gained traction -- not only in actuarial science~\cite{Christiansen2021,ChristiansenFurrer2022}, but also in biostatistics and other related fields~\cite{PutterSpitoni2016,MaltzahnEtAl2021,NiesslEtAl2023}. By imposing no assumption whatsoever on the intertemporal dependence structure of the state process, one seeks to reduce systematic risk and increase robustness. Simulation studies underpin that, if there is a substantial departure from Markovianity, a robust approach is clearly to be preferred~\cite{PutterSpitoni2016}.

Importantly, the Aalen--Johansen estimator remains a consistent and asymptotically normal estimator for occupation probabilities even when the Markov assumption is relaxed~\cite{DattaSatten2001,Glidden2002}. The landmark Aalen--Johansen estimator, which just invokes sub-sampling, extends this directly to transition probabilities~\cite{PutterSpitoni2016} and, if the insurance contract admits a so-called deterministic canonical cash flow representation, also to state-wise prospective reserves~\cite{Christiansen2021} based on the plug-in principle.

An insurance contract admits a deterministic canonical cash flow representation if the associated sojourn and transition payments are deterministic, see Definition~5.1 in~\cite{Christiansen2021} for the specifics. This assumption is often violated in practice, a key example being so-called scaled payments~\cite{ChristiansenFurrer2022}. Such payments arise naturally in the context of implicit policyholder options, especially in connection with free policy conversion and stochastic retirement, see~\cite{HenriksenEtAl2014,BuchardtMoller2015,BuchardtMollerSchmidt2015,GadNielsen2016,Furrer2022}. Here, the contractual payments are scaled by a factor depending on the exercise time(s) of the option(s), as to account for the fact that free policy conversion or early retirement should lead to reduced future benefits.

The actuarial literature currently offers two non-parametric approaches that are able to handle scaled payments in the non-Markov regime under entirely random right-censoring. The first is the one-dimensional estimator of~\cite{ChristiansenFurrer2022}, which is of usual Nelson--Aalen and Aalen--Johansen type, but uses auxiliary data based on a change of measure argument lifted from~\cite{Furrer2022}. The second is the two-dimensional estimator of~\cite{Bathke2024}, which is an extension of the Volterra estimator of P.J.\ Bickel for bivariate survival functions~\cite{Dabrowska1988,GillVanDerLaanWellner1995} to multi-state models. Both estimators possess certain deficiencies, however. The auxiliary data used by the estimator of~\cite{ChristiansenFurrer2022} comes with extra noise not originally present in the sample. The estimator of~\cite{Bathke2024} is computationally demanding and inherits the inferior tail behavior of the Volterra estimator.

This paper introduces {novel scaled (or weighted) one-dimensional Nelson--Aalen and Aalen--Johansen estimators that resolve} the aforementioned issues. We establish strong uniform consistency and asymptotic normality of the estimator{s}. En passant, we also establish asymptotic normality of the two-dimensional estimator from~\cite{Bathke2024}. Finally, we compare the finite sample performances of the estimators in a simulation study. Overall, the scaled estimator{s are} found to have the best performance.

Throughout, primary focus lies on scaled payments arising from the inclusion of policyholder options, which imposes a specific structure on the scaling. However, the theoretical insights do not really rely on this structure, and our results are therefore more broadly applicable, including to situations where scaling is due to discounting. In particular, this leads to an extension of Theorem~3.1 of~\cite{AhmadBladt2023} to the non-Markov regime. Further, it allows for a constructive and simultaneous definition of forward interest and forward transition rates, addressing lasting concerns in the actuarial literature~\cite{MiltersenPersson2005,Norberg2010,Buchardt2014,ChristiansenNiemeyer2015,Buchardt2017,BuchardtFurrerSteffensen2019}.

The paper is structured as follows. In Section~\ref{sec:pre}, we introduce the basic multi-state setup, including the scaled payment process. This is followed by Section~\ref{sec:scaled}, in which the novel scaled one-dimensional {Nelson--Aalen and Aalen--Johansen estimators are} introduced. Section~\ref{sec:val} focuses on representations of the expected cash flow and the resulting targets for estimation{, substantiating the relevance of the scaled estimator, while Section~\ref{sec:num} contains} the main results on consistency and asymptotic normality. In Section~\ref{sec:sim}, we present the comparative simulation study. Finally, Section~\ref{sec:extension} showcases the broader actuarial applicability of our approach.

\section{Preliminaries}\label{sec:pre}

Let $(\Omega,\mathcal{F},\amsmathbb{F},\amsmathbb{P})$ be an underlying filtered probability space. The state of the insured is modeled by a non-explosive jump process $Z = (Z_t)_{t\geq{0}}$ on a finite state space $\mathcal{J}$ with $Z_0 \equiv z_0 \in \mathcal{J}$. Note that $Z$ is not assumed to be say Markov or semi-Markov. We associate with $Z$ a multivariate counting process $N$, which components $N_{jk} = (N_{jk}(t))_{t\geq0}$ are given by
\begin{align*}
N_{jk}(t) = \#\big\{s \in (0,t] : Z_{s-} = j, Z_s = j\big\}.
\end{align*}
Throughout, we assume that $\amsmathbb{E}[N_{jk}(t)] < \infty$ for all components, which in particular implies the aforementioned non-explosion of $Z$. 

Incidental modeling of policyholder behavior such as free policy conversion and stochastic retirement proceeds by considering a state space of the form $\mathcal{J} = \mathcal{J}_0 \cup \mathcal{J}_1$, where $\mathcal{J}_0$ and $\mathcal{J}_1$ are the states pre- and post-exercise of the policyholder option, respectively. This interpretation is meaningful as long as we require $z_0 \in \mathcal{J}_0$ and $\mathcal{J}_1$ to be absorbing for $Z$, that is
\begin{align*}
\amsmathbb{P}(Z_t \in \mathcal{J}_1, Z_s \in \mathcal{J}_0) = 0 \text{ for } t \leq s.
\end{align*}
These assumptions are thus made throughout.

The first (and only) hitting time of $\mathcal{J}_1$ by $Z$ is denoted $\tau$:
\begin{align*}
\tau = \inf\{t > 0 : Z_t \in \mathcal{J}_1\}.
\end{align*}
We can interpret $\tau$ as the exercise time of the policyholder option.

Insurance contracts may be described by payment streams consisting of benefits less premiums. We are interested in contracts of the scaled form
\begin{align*}
B(\mathrm{d}t)
&=
H(t) B^\circ(\mathrm{d}t), &&B(0) = B^\circ(0), \\
H(t) &= \rho(\tau,Z_{\tau-},Z_\tau)^{\mathds{1}_{\{\tau \leq t\}}}, \\
B^\circ(\mathrm{d}t)
&=
\sum_{j\in\mathcal{J}} \mathds{1}_{\{Z_{t-} = j\}} B_j(\mathrm{d}t)
+
\sum_{j,k \in \mathcal{J}\atop j\neq k} b_{jk}(t)N_{jk}(\mathrm{d}t), &&B^\circ(0) \equiv b_0 \in \amsmathbb{R},
\end{align*}
where each $B_j$ is a right-continuous real function of finite variation, describing the deterministic sojourn payments in the respective state; each $b_{jk}$ is a measurable real function which is bounded on compacts, describing the deterministic transition payments between the respective states; and each scaling factor $[0,\infty) \ni t \mapsto \rho(t,j,k)$ is non-negative and predictable with respect to the information generated by $Z$, the filtration $\amsmathbb{F}^Z$. In practice, it is common to determine the scaling factors as to maintain actuarial equivalence on some safe-side basis and, due to the typically simple nature of that basis, each scaling factor would be deterministic rather than just predictable. 

In these contracts, all payments are scaled by a common factor upon exercise of the policyholder option. If the factor is below one, the payments are decreased, while if it is above one, the payments are increased. The idea is that, for instance, early retirement should lead to reduced retirement benefits, while delayed retirement should lead to increased retirement benefits. Throughout, we assume that each $\rho(\cdot,j,k)$ is uniformly bounded on compacts. However, the specific form of $B$ in terms of $B^\circ$ and $\rho$ has a redundancy and thus allows one to reparametrize the model such that each scaling factor $\rho(\cdot,j,k)$ is uniformly bounded by one.

Solely to decrease the notational burden, we assume that $\amsmathbb{P}(\tau = t) B_j(\mathrm{d}t) = 0$. In other words, lump sum sojourn payments may not occur simultaneously with the exercise of the policyholder option. If such lump sum sojourn payments are possible, they might always be included as transition payments from $\mathcal{J}_0$ to $\mathcal{J}_1$ instead.

In case of a finite maximal contract time $T\in(0,\infty)$, the prospective reserve at contract inception reads
\begin{align*}
&V(0-) = b_0 + V(0), &&V(0) = \int_0^T \frac{1}{\kappa(t)} A(\mathrm{d}t),
\end{align*}
for a deterministic savings account $\kappa$ assumed right-continuous, of finite variation, and bounded away from zero, and where the expected accumulated cash flow $A$ is defined according to $A(t) = \amsmathbb{E}[B(t) - B(0)]$.

In this paper, we focus on non-parametric estimation of $V(0-)$ through $A$. This might at first appear as a rather narrow approach. For instance, the as-if-Markov approach of~\cite{Christiansen2021,ChristiansenFurrer2022} considers expected accumulated cash flows and prospective reserves on the form
\begin{align*}
A(t_0,t) &= \amsmathbb{E}[B(t) - B(t_0) \, | Z_{t_0}], \\
V(t_0) &= \int_{t_0}^T \frac{\kappa(t_0)}{\kappa(t)} A(t_0,\mathrm{d}t).
\end{align*}
However, our approach may be adapted to also deal with this situation. The main idea is to use the landmark $Z_{t_0}$, meaning one should condition on or sub-sample with respect to events of the type $\{Z_{t_0} = i\}$, which is also the approach adopted in~\cite{Christiansen2021,ChristiansenFurrer2022}. We merely focus on the situation at contract inception to reduce the notational burden. If conditioning with respect to $Z_{t_0}$ is replaced by more sophisticated information such as $(Z_{t_0},U_{t_0})$, where $U$ is the duration process associated with $Z$, kernel methods could be used, confer with~\cite{BladtFurrer2024}. Kernel methods are outside the scope of this paper.

{

\section{Scaled estimators}\label{sec:scaled}

If data consisted of complete observations of $n$ independent insured, with the $\ell$'th insured represented by $Z^\ell$, we could {for the expected accumulated cash flow} use the estimator
\begin{align*}
[0,\infty) \ni t \mapsto 
\frac{1}{n} \sum_{\ell=1}^n {\big(B^\ell(t) - B^\ell(0)\big)},
\end{align*}
where $B^\ell$ refers to $B$, but with $Z$ replaced by $Z^\ell$ and similarly for the associated multivariate counting processes.

In this paper, we are interested in the situation where observation of $Z$ is right-censored. To this end, we introduce a strictly positive random variable $R$ describing right-censoring. Letting $\eta$ denote the (possible infinite) absorption time of $Z$, we thus observe the pair
\begin{align*}
\big((Z_t)_{0\leq t\leq R}, \eta \wedge R\big).
\end{align*}
By $(Z_t)_{0 \leq t \leq R}$ we understand the full path of the jump process stopped at $R$, meaning $Z_t$ for $t \leq R$ and $Z_R$ for $t > R$. Given the initial state, observation of the stopped jump process corresponds to observing all transitions in the random interval $[0,R]$. Note that the right-censoring time $R$ is only observed if absorption {does not occur} before right-censoring.

If $Z$ had been assumed Markov, non-parametric estimation of the (cumulative) transition rates and transition probabilities under various sampling effects is still possible using the classic Nelson--Aalen and Aalen--Johansen estimators, respectively. The Nelson--Aalen and Aalen--Johansen estimators actually remain viable for certain `averaged' hazards and for occupation probabilities even without any assumptions on the intertemporal dependence structure of $Z$, confer with~\cite{DattaSatten2001}. However, due to the scaled nature of the payment streams involved, the estimation of more complex quantities that transcend mere occupation probabilities is required. To overcome this challenge, we introduce scaled versions of the Nelson--Aalen and Aalen--Johansen estimators. Disregarding some technical nuisances, these estimators are given by
\begin{align*}
\mathbb{\Lambda}^\rho_{jk}(t)
&=
\int_0^t \frac{1}{\amsmathbb{I}^\rho_j(s-)}  \amsmathbb{N}^\rho_{jk}(\mathrm{d}s), \\
\mathbb{p}^\rho_j(t)
&=
\bigg[\Prodi_0^t \big(\text{Id} + \mathbb{\Lambda}^\rho(\mathrm{d}s)\big)\bigg]_{z_0j},
\end{align*}
where for $n$ i.i.d.\ replicates $\big((Z^\ell_t)_{0\leq t \leq R^\ell},\eta^\ell \wedge R^\ell\big)_{\ell=1}^n$ the components are given by
\begin{align*}
\amsmathbb{I}^\rho_j(t)&= 
\frac{1}{n} \sum_{\ell=1}^n H^\ell(t) \mathds{1}_{\{Z^\ell_t = j\}} \mathds{1}_{\{t < R^\ell\}}, \\
\amsmathbb{N}^\rho_{jk}(t) &= \frac{1}{n}\sum_{\ell=1}^n \int_0^t H^\ell(s) \mathds{1}_{\{s \leq R^\ell\}} N^\ell_{jk}(\mathrm{d}s).
\end{align*}
We recover the classic Nelson--Aalen and Aalen--Johansen estimators by taking $H \equiv 1$. If and when these estimators converge, their limits must be
\begin{align*}
\Lambda^\rho_{jk}(t) &= \int_0^t \frac{1}{\amsmathbb{E}\big[H(s-) \mathds{1}_{\{Z_{s-} = j\}}\big]} \, \mathrm{d}\amsmathbb{E}{\bigg[\int_0^s H(u) N_{jk}(\mathrm{d}u)\bigg]}, \\
p^\rho_j(t)
&=
\bigg[\Prodi_0^t \big(\text{Id} + \Lambda^\rho(\mathrm{d}s)\big)\bigg]_{z_0j},
\end{align*}
presupposing that the terms related to right-censoring in the denominator and numerator cancel. The outline for the next two sections is now as follows. In Section~\ref{sec:val}, we identify these limits and connect them to the expected accumulated cash flows, which validates the purposefulness of the scaled estimators. In Section~\ref{sec:num}, we examine in more detail the properties of the scaled estimators, proving their uniform strong consistency and asymptotic normality. Throughout, we also draw comparisons to and further expand on the alternative two-dimensional approach of~\cite{Bathke2024}.

\subsubsection*{Notational matters}

In the following, a considerable notational burden can unfortunately not be avoided. However, the reader can be reassured that the following overarching principles are applied:
\begin{itemize}
\item[--] Estimators are `blackboard bold' versions of their targets. For instance, the estimator $\mathbb{p}^\rho_j$ targets the theoretical quantity $p^\rho_j$ \\[-2mm]
\item[--] Occupation probabilities, mean counts, and similar quantities are called $p$; we distinguish between whether a single or two subscripts is used \\[-2mm]
\item[--] Hazards and related quantities are called $\Lambda$ \\[-2mm]
\item[--] Scaled quantities carry $\rho$ as a superscript \\[-2mm]
\item[--] The two-dimensional approach involves vectors, which are typeset in \textbf{bold}.
\end{itemize}

}

\section{Valuation}\label{sec:val}

{In this section, we seek} a representation of the expected accumulated cash flow with components that may naturally be targeted for non-parametric estimation even when observation is incomplete. This typically involves hazards.

\subsection{One-dimensional representations}\label{subsec:1d_reps}

The first representation is inspired by results in~\cite{ChristiansenFurrer2022}. The approach of~\cite{ChristiansenFurrer2022} utilizes the general change of measure techniques developed in~\cite{Furrer2022}. We offer a slight adjustment of this approach that is both more direct and avoids the condition that the scaling factors $\rho(\cdot,j,k)$ be uniformly bounded by one.

Define $p^\rho_j$, $p^\rho_{jk}$, $p_j$, and $p_{jk}$ according to
\begin{align*}
p^\rho_j(t) &= \amsmathbb{E}[H(t) \mathds{1}_{\{Z_t = j\}}], &&p_j(t) = \amsmathbb{E}[\mathds{1}_{\{Z_t = j\}}], &&j \in \mathcal{J}, \\
p^\rho_{jk}(t) &= \amsmathbb{E}\Big[\int_0^t H(s) N_{jk}(\mathrm{d}s)\Big], &&p_{jk}(t) = \amsmathbb{E}[N_{jk}(t)], &&j,k \in \mathcal{J}, j \neq k.
\end{align*}
Further, define $\Lambda^\rho_{jk}$ and $\Lambda_{jk}$ according to
\begin{align*}
\Lambda^\rho_{jk}(t) &= \int_0^t \frac{1}{p^\rho_j(s-)} p^\rho_{jk}(\mathrm{d}s),  &&\Lambda_{jk}(t) = \int_0^t \frac{1}{p_j(s-)} p_{jk}(\mathrm{d}s), &&j,k \in \mathcal{J}, j \neq k.
\end{align*}
\begin{remark}
Note that for $j,k \in \mathcal{J}_0$, $j \neq k$, it holds that $p^\rho_j = p_j$ and $p^\rho_{jk} = p_{jk}$ and therefore $\Lambda^\rho_{jk} = \Lambda_{jk}$. Furthermore, for $j \in \mathcal{J}_1, k \in \mathcal{J}_0$ it holds that $\Lambda^\rho_{jk} = \Lambda_{jk} = 0$.
\end{remark}
Throughout, we shall assume that $\Lambda_{jk}(t) < \infty$ and $\Lambda^\rho_{jk}(t) < \infty$. Since each $\rho(\cdot,j,k)$ is assumed uniformly bounded on compacts, the former implies the latter if also $\rho(\cdot,j,k)$ is uniformly bounded away from zero on the support of $\tau$.
\begin{remark}
That each $\rho(\cdot,j,k)$ is uniformly bounded away from zero on the support of $\tau$ is a sufficient but not necessary condition for $\Lambda_{jk}(t) < \infty$ to imply $\Lambda^\rho_{jk}(t) < \infty$. Let $j,k \in \mathcal{J}_1$, $j \neq k$, which is the interesting case. If for instance the compensator $C_{jk}$ of $N_{jk}$ is given by
\begin{align*}
C_{jk}(t) = \int_0^t \mathds{1}_{\{Z_{s-} = j\}} \alpha_{jk}(\mathrm{d}s),
\end{align*}
compare with the case of $Z$ being Markov, and $\rho(\cdot,j,k)$ is strictly positive, then
\begin{align*}
p^\rho_{jk}(\mathrm{d}t)
&=
\amsmathbb{E}[\rho(\tau,Z_{\tau-},Z_\tau)\amsmathbb{P}(Z_{t-} = j \, | \, Z_{\tau})]\alpha_{jk}(\mathrm{d}t) \\
&=
\amsmathbb{E}[\rho(\tau,Z_{\tau-},Z_\tau)\mathds{1}_{\{Z_{t-} = j\}}]\alpha_{jk}(\mathrm{d}t) 
=
p^\rho_j(t-) \alpha_{jk}(\mathrm{d}t),
\end{align*}
so that $\Lambda^\rho_{jk}(\mathrm{d}t) = \alpha_{jk}(\mathrm{d}t)$. In similar fashion, it holds that $\Lambda_{jk}(\mathrm{d}t) = \alpha_{jk}(\mathrm{d}t)$, so that $\Lambda^\rho_{jk} = \Lambda_{jk}$. Therefore, whether each $\rho(\cdot,j,k)$ is uniformly bounded away from zero on the support of $\tau$ or not, it holds that $\Lambda_{jk}(t) < \infty$ implies $\Lambda^\rho_{jk}(t) < \infty$.
\end{remark}
In the following, let $\Lambda_{jj}$, $j\in\mathcal{J}$, be such that all row sums are constantly equal to zero, that is $-\Lambda_{jj} = \sum_{k \neq j} \Lambda_{jk}$. To the contrary, let $\Lambda^\rho_{jj}$ be given by
\begin{align*}
-\Lambda^\rho_{jj} &= -\Lambda_{jj}, &&j \in \mathcal{J}_0, \\
-\Lambda^\rho_{jj} &= \sum_{k \in \mathcal{J} \atop k \neq j} \Lambda^\rho_{jk}, &&j \in \mathcal{J}_1.
\end{align*}
Note that all row sums $\sum_{k \in \mathcal{J}} \Lambda^\rho_{jk}$ are not necessarily constantly equal to zero, unless each $\rho(\cdot,j,k)$ is constantly equal to one.
\begin{proposition}\label{prop:rep1d}
It holds that
\begin{align*}
A(\mathrm{d}t)
=
\sum_{j\in\mathcal{J}} p^\rho_j(t-) \bigg( B_j(\mathrm{d}t) + \sum_{k \in \mathcal{J}\atop k \neq j} b_{jk}(t) \Lambda^\rho_{jk}(\mathrm{d}t)\bigg)
\end{align*}
and that
\begin{align*}
p^\rho_j(t)
&=
\bigg[\Prodi_0^t \big(\emph{Id} + \Lambda^\rho(\mathrm{d}s)\big)\bigg]_{z_0j}, &&j\in\mathcal{J}.
\end{align*}
\end{proposition}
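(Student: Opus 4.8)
The plan is to establish the two claims separately. For the cash-flow representation, I would start from $A(\mathrm{d}t) = \amsmathbb{E}[B(\mathrm{d}t)] = \amsmathbb{E}[H(t)B^\circ(\mathrm{d}t)]$ and split $B^\circ$ into its sojourn and transition parts. For the sojourn part the generic summand is $\amsmathbb{E}[H(t)\mathds{1}_{\{Z_{t-}=j\}}]B_j(\mathrm{d}t)$; here I would argue that $H(t)$ may be replaced by $H(t-)$, since $H(t)-H(t-)$ is supported on $\{\tau=t\}$ and the standing assumption $\amsmathbb{P}(\tau=t)B_j(\mathrm{d}t)=0$, together with boundedness of $\rho$ on compacts, annihilates the difference against $B_j$. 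Recognizing $\amsmathbb{E}[H(t-)\mathds{1}_{\{Z_{t-}=j\}}] = p^\rho_j(t-)$ yields the term $p^\rho_j(t-)B_j(\mathrm{d}t)$. For the transition part, $\amsmathbb{E}[H(t)N_{jk}(\mathrm{d}t)] = p^\rho_{jk}(\mathrm{d}t)$ holds directly by the definition of $p^\rho_{jk}$, and substituting the defining relation $p^\rho_{jk}(\mathrm{d}t) = p^\rho_j(t-)\Lambda^\rho_{jk}(\mathrm{d}t)$ and factoring out $p^\rho_j(t-)$ produces the stated formula.

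For the product-integral representation, the plan is to show that the row vector $p^\rho = (p^\rho_j)_{j\in\mathcal{J}}$ solves the forward Kolmogorov equation $p^\rho(t) = e_{z_0}^\top + \int_0^t p^\rho(s-)\Lambda^\rho(\mathrm{d}s)$ with initial condition $p^\rho(0) = e_{z_0}^\top$, and then to invoke uniqueness of the product integral as its solution. The core computation is the jump decomposition of the process $H(t)\mathds{1}_{\{Z_t=j\}}$: it increases by $H(s)$ at each transition into $j$ and decreases by $H(s-)$ at each transition out of $j$, so that $H(t)\mathds{1}_{\{Z_t=j\}} = H(0)\mathds{1}_{\{Z_0=j\}} + \sum_{k\neq j}\int_0^t H(s)N_{kj}(\mathrm{d}s) - \sum_{k\neq j}\int_0^t H(s-)N_{jk}(\mathrm{d}s)$. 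Taking expectations identifies the two integrals as $p^\rho_{kj}$ and $p^\rho_{jk}$, giving an integral equation for $p^\rho_j$; substituting $p^\rho_{kj}(\mathrm{d}t)=p^\rho_k(t-)\Lambda^\rho_{kj}(\mathrm{d}t)$ and the row-sum convention for $\Lambda^\rho_{jj}$ turns this into exactly the $j$-th row of the forward equation. Finiteness of $\Lambda^\rho_{jk}(t)$ ensures that $\Lambda^\rho$ is of finite variation, so the forward equation is well-posed and its solution is the asserted product integral.

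The delicate points, which I would treat case by case according to $j\in\mathcal{J}_0$ versus $j\in\mathcal{J}_1$, concern the bookkeeping of $H$ at the exercise time $\tau$. For a transition into $j\in\mathcal{J}_1$ that is itself the exercise, $H(s)=\rho(\tau,Z_{\tau-},Z_\tau)$ is the post-jump value, which is precisely what $p^\rho_{kj}(\mathrm{d}t)$ records; for a transition out of $j\in\mathcal{J}_1$, which necessarily occurs strictly after $\tau$ as $\mathcal{J}_1$ is absorbing, one has $H(s-)=H(s)$, so the outgoing integral also reduces to $p^\rho_{jk}$, and the row closes using $-\Lambda^\rho_{jj}=\sum_{k\neq j}\Lambda^\rho_{jk}$. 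For $j\in\mathcal{J}_0$ the factor $H$ is identically one on $\{Z_t=j\}$, whence $p^\rho_j=p_j$, while the contributions from $\mathcal{J}_1$ vanish because $\Lambda^\rho_{kj}=0$ there, so the equation closes with the complementary convention $\Lambda^\rho_{jj}=\Lambda_{jj}$. I expect this analysis at $\tau$ — ensuring that the $\rho$-scaling attaches to exactly the right transitions and that the two asymmetric conventions for $\Lambda^\rho_{jj}$ are precisely what makes each row consistent — to be the main obstacle; the remainder is the standard identification of a forward equation with its product integral.
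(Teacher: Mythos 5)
Your proposal is correct and takes essentially the same route as the paper: the first claim by trading $H(t)$ for $H(t-)$ against $B_j(\mathrm{d}t)$ via the standing assumption $\amsmathbb{P}(\tau = t)B_j(\mathrm{d}t) = 0$, and the second by deriving the forward integral equations for $(p^\rho_j)_j$ from the jump bookkeeping of $H(t)\mathds{1}_{\{Z_t = j\}}$ --- the paper's key identity~\eqref{eq:key_identity}, split over $j \in \mathcal{J}_0$ and $j \in \mathcal{J}_1$ --- followed by uniqueness of the product integral as the solution. Your delicate points at $\tau$ (the post-jump value $H(s)$ on entries, $H(s-) = H(s)$ on exits from the absorbing set $\mathcal{J}_1$, and the asymmetric diagonal conventions being exactly what closes each row) are precisely the content of the paper's equations~\eqref{eq:key_identityJ0} and~\eqref{eq:key_identityJ1}.
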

\begin{remark}
Another way to phrase the second part of the theorem, which is also utilized in the proof, is that $(p_j^\rho)_j$ uniquely solves the forward integral equations
\begin{align*}
p_j^\rho(\mathrm{d}t) &= \sum_{k \in \mathcal{J}} p_k^\rho(t-) \Lambda^{\rho}_{kj}(\mathrm{d}t), &&j \in \mathcal{J},
\end{align*}
with initial conditions $p_j^\rho(0) = \mathds{1}_{\{j = z_0\}}$.
\end{remark}
\begin{proof}
The first result is trivial upon noticing that
\begin{align*}
p^\rho_j(t-)
=
\amsmathbb{E}[H(t) \mathds{1}_{\{Z_{t-} = j\}}]
+
\amsmathbb{E}[(1-\rho(\tau,Z_{\tau-},Z_\tau))\mathds{1}_{\{\tau = t\}} \mathds{1}_{\{Z_{t-} = j\}}],
\end{align*}
where the latter term vanishes since we assumed that $\amsmathbb{P}(\tau = t)B_j(\mathrm{d}t) = 0$.

The second result is more involved. Introduce
\begin{align*}
q_j(t) = \bigg[\Prodi_0^t \big(\text{Id} + \Lambda^\rho(\mathrm{d}s)\big)\bigg]_{z_0j}.
\end{align*}
These are the unique solution to a set of forward integral equations, which due to the specific choice of the diagonal of $\Lambda^\rho$ read
\begin{align*}
q_j(\mathrm{d}t) &= \sum_{k \in \mathcal{J}_0} q_k(t-) \Lambda_{kj}(\mathrm{d}t), &&j \in \mathcal{J}_0, \\
q_j(\mathrm{d}t) &= \sum_{k \in \mathcal{J}} q_k(t-) \Lambda^\rho_{kj}(\mathrm{d}t), &&j \in \mathcal{J}_1,  
\end{align*}
with boundary conditions $q_j(0) = \mathds{1}_{\{j = z_0\}}$ for $j \in \mathcal{J}_0$ and $q_j(0) = 0$ for $j \in \mathcal{J}_1$.

Recall, that
\begin{align}\label{eq:key_identity}
\mathds{1}_{\{Z_t = j\}}
=
\mathds{1}_{\{j = z_0\}}
+
\sum_{k \in \mathcal{J} \atop k \neq j} \big(N_{kj}(t) - N_{jk}(t)\big).
\end{align}
Let $j\in\mathcal{J}_0$. Then~\eqref{eq:key_identity} reads
\begin{align}\label{eq:key_identityJ0}
\mathds{1}_{\{Z_t = j\}}
=
\mathds{1}_{\{j = z_0\}}
+
\sum_{k \in \mathcal{J}_0 \atop k \neq j} N_{kj}(t)
-
\sum_{k \in \mathcal{J} \atop k \neq j} N_{jk}(t),
\end{align}
and since $p^\rho_j(t) = p_j(t)$, by taking expectations on both sides of~\eqref{eq:key_identityJ0} we obtain
\begin{align*}
p^\rho_j(\mathrm{d}t) &= \sum_{k \in \mathcal{J}_0} p_k(t-) \Lambda_{kj}(\mathrm{d}t), &&p^\rho_j(0) = \mathds{1}_{\{j = z_0\}}.
\end{align*}
Let instead $j\in\mathcal{J}_1$. Then~\eqref{eq:key_identity} implies
\begin{align}\label{eq:key_identityJ1}
H(t)\mathds{1}_{\{Z_t = j\}}
=
\sum_{k \in \mathcal{J} \atop k \neq j} \int_0^t H(s) N_{kj}(\mathrm{d}s)
-
\sum_{k \in \mathcal{J}_1 \atop k \neq j} \int_0^t H(s) N_{jk}(\mathrm{d}s).
\end{align}
Upon taking expectations on both sides of~\eqref{eq:key_identityJ1}, we finally obtain
\begin{align*}
p^\rho_j(\mathrm{d}t) &= \sum_{k \in \mathcal{J}} p^\rho_k(t-)\Lambda^\rho_{kj}(\mathrm{d}t), &&p^\rho_j(0) = 0.
\end{align*}
Collecting results and referring to the uniqueness of the solution to the forward integral equations, we conclude that $p^\rho_j = q_j$ for all $j \in \mathcal{J}$, as desired.
\end{proof}
\begin{remark}
The identification of $p^\rho$ with the product integral of $\Lambda^\rho$ is the non-trivial part of Proposition~\ref{prop:rep1d}. As a special case, we obtain that
\begin{align}\label{eq:overgaard}
p_j(t)
=
\bigg[\Prodi_0^t \big(\text{Id} + \Lambda(\mathrm{d}s)\big)\bigg]_{z_0j}.
\end{align}
Such identities were the focal point of~\cite{Overgaard2019}, who associated the left-hand side with a limit of products of transition probabilities and showed that if these transition probabilities are suitably regular, then $\Lambda_{jk}(t) < \infty$ and~\eqref{eq:overgaard} holds. By pertaining to the key identity~\eqref{eq:key_identity} and properties of the product integral, the proof becomes much shorter and simpler.
\end{remark}
The representation of $A$ in Proposition~\ref{prop:rep1d} is pleasing, since $A$ is essentially cast as a well-behaved map of $\Lambda^\rho$. However, the matrix function $\Lambda^\rho$ is -- strictly speaking -- not a hazard matrix. The presence of $H$ has entailed a choice of diagonal that does not ensure the row sums to be constantly zero. This differs from the approach in~\cite{ChristiansenFurrer2022}, where the introduction of an auxiliary jump process allows for a hazard interpretation. We briefly summarize this approach and compare it to our approach.

Suppose each $\rho(\cdot,j,k)$ is uniformly bounded by one, and define $\widetilde{Z} = (\widetilde{Z}_t)_{t\geq0}$ according to
\begin{align*}
\tilde{Z}_t
=
\mathds{1}_{\{Z_t \in \mathcal{J}_0\}} Z_t
+ \mathds{1}_{\{Z_t \in \mathcal{J}_1\}} \big(
\mathds{1}_{\{U \leq \rho(\tau,Z_{\tau-},Z_\tau)\}} Z_t + \mathds{1}_{\{U > \rho(\tau,Z_{\tau-},Z_\tau)\}} \nabla\big),
\end{align*}
where $\nabla$ is an artificial cemetery state and $U \sim \text{Unif}(0,1)$ is independent of $Z$. Set $\widetilde{\mathcal{J}} = \mathcal{J} \cup \{\nabla\}$, denote by $\widetilde{N}$ the multivariate counting process corresponding to $\widetilde{Z}$, and define 
\begin{align*}
\widetilde{p}_j(t) &= \amsmathbb{E}[\mathds{1}_{\{\tilde{Z}_t = j\}}], &&\widetilde{p}_{jk}(t) = \amsmathbb{E}[\widetilde{N}_{jk}(t)], &&&\widetilde{\Lambda}_{jk}(t) &= \int_0^t \frac{1}{\widetilde{p}_j(s-)} \widetilde{p}_{jk}(\mathrm{d}s)
\end{align*}
for $j,k \in \widetilde{\mathcal{J}}$, $j \neq k$. Further, set
\begin{align*}
-\widetilde{\Lambda}_{jj} = \sum_{k \in \widetilde{\mathcal{J}} \atop k \neq j}  \widetilde{\Lambda}_{jk}, &&j\in\widetilde{\mathcal{J}},
\end{align*}
so that all row sums are constantly equal to zero. In Subsection~4.1 of~\cite{ChristiansenFurrer2022} it is, among other things, shown that
\begin{align*}
A(\mathrm{d}t)
=
\sum_{j\in\mathcal{J}} \widetilde{p}_j(t-) \bigg( B_j(\mathrm{d}t) + \sum_{k \in \mathcal{J}\atop k \neq j} b_{jk}(t) \widetilde{\Lambda}_{jk}(\mathrm{d}t)\bigg)
\end{align*}
and that
\begin{align*}
\widetilde{p}_j(t)
&=
\bigg[\Prodi_0^t \big(\text{Id} + \widetilde{\Lambda}(\mathrm{d}s)\big)\bigg]_{z_0j}, &&j\in\mathcal{J}.
\end{align*}
From the latter identity it is rather obvious that $\Lambda^\rho$ corresponds exactly to $\widetilde{\Lambda}$ with the row and column corresponding to the artificial cemetery state removed. What is then the advantage of the latter representation? This representation is convenient insofar that it retains the interpretation of $\widetilde{\Lambda}$ as an actual hazard matrix and thus of $\widetilde{p}$ as an actual probability vector, from which estimation procedures and their asymptotic properties almost suggest themselves. The representation is inconvenient, on the other hand, insofar that it requires an auxiliary added source of randomness, namely $U$, as well as the condition that the scaling factors be uniformly bounded by one.

\subsection{Two-dimensional representation}

Two-dimensional representations were proposed and studied in~\cite{BathkeChristiansen2024}. We here give a concise summary adapted to the case of scaled payments. In comparison to the one-dimensional representation, two further assumptions are required, namely the strengthened moment condition $\amsmathbb{E}[N_{jk}(t)^2] < \infty$ and that $\rho(\cdot,j,k)$ is deterministic, where $j,k\in\mathcal{J}$, $j \neq k$.

Let $\boldsymbol{t} = (t_1,t_2)$ and $\boldsymbol{j} = (j_1,j_2)$. Define $p_{\boldsymbol{j}}$ and $p_{\boldsymbol{j}\boldsymbol{k}}$ according to
\begin{align*}
p_{\boldsymbol{j}}(\boldsymbol{t})
&=
\amsmathbb{E}[\mathds{1}_{\{Z_{t_1} = j_1\}}\mathds{1}_{\{Z_{t_2} = j_2\}}], &&j_1,j_2\in\mathcal{J}, \\
p_{\boldsymbol{j}\boldsymbol{k}}(\boldsymbol{t})
&=
\amsmathbb{E}[N_{j_1k_1}(t_1)N_{j_2k_2}(t_2)], &&j_1,k_1,j_2,k_2\in\mathcal{J}, j_1 \neq k_1, j_2 \neq k_2. 
\end{align*}
Further, define $\Lambda_{\boldsymbol{j}\boldsymbol{k}}$ according to
\begin{align*}
\Lambda_{\boldsymbol{j}\boldsymbol{k}}(\boldsymbol{t})
&=
\int_{\boldsymbol{0}}^{\boldsymbol{t}}
\frac{1}{p_{\boldsymbol{j}}(\boldsymbol{s-})} p_{\boldsymbol{j}\boldsymbol{k}}(\mathrm{d}\boldsymbol{s}),  &&j_1,k_1,j_2,k_2\in\mathcal{J}, j_1 \neq k_1, j_2 \neq k_2.
\end{align*}
Throughout, we shall assume that $\Lambda_{\boldsymbol{j}\boldsymbol{k}}(\boldsymbol{t}) < \infty$. The following result summarizes and adapts Example~2.4, Section~7, Theorem~3.3, and Theorem~4.3  from~\cite{BathkeChristiansen2024} to the current setting.
\begin{proposition}\label{prop: scaled-payments as two-dimensional cashflow}
Suppose that $\amsmathbb{E}[N_{jk}(t)^2] < \infty$ and that $\rho(\cdot,j,k)$ is deterministic for $j,k\in\mathcal{J}$, $j \neq k$. It then holds that
\begin{align*}
&A(t) - b_0 \\
&=
\sum_{j\in\mathcal{J}_0} \int_0^t p_j(s-)
\bigg(
B_j(\mathrm{d}s) + \sum_{k \in \mathcal{J}_0\atop k \neq j} b_{jk}(s) \Lambda_{jk}(\mathrm{d}s) + \sum_{k \in \mathcal{J}_1} \rho(s,j,k) b_{jk}(s) \Lambda_{jk}(\mathrm{d}s)
\bigg) \\
&\quad+
\sum_{j_2\in\mathcal{J}_1}\int_0^t \sum_{j_1 \in \mathcal{J}_0\atop k_1 \in \mathcal{J}_1} \int_{\boldsymbol{0}}^{(t,s-)} \rho(u_1,j_1,k_1)
\bigg( 
\sum_{k_2 \in \mathcal{J}\atop k_2 \neq j_2} p_{(j_1,k_2)}(\boldsymbol{u-}) \Lambda_{(j_1,k_2)(k_1,j_2)}(\mathrm{d}\boldsymbol{u})  \\
&\hspace{62mm}-
\sum_{k_2 \in \mathcal{J}_1\atop k_2 \neq j_2} p_{\boldsymbol{j}}(\boldsymbol{u}-) \Lambda_{\boldsymbol{j}\boldsymbol{k}}(\mathrm{d}\boldsymbol{u})
\bigg)
B_{j_2}(\mathrm{d}s) \\
&\quad+
\sum_{j_2,k_2 \in \mathcal{J}_1 \atop j_2 \neq k_2} \sum_{j_1\in\mathcal{J}_0\atop k_1\in\mathcal{J}_1} \int_{\boldsymbol{0}}^{(t,t)}
\rho(u_1,j_1,k_1)b_{j_2k_2}(u_2)p_{\boldsymbol{j}}(\boldsymbol{u-})\Lambda_{\boldsymbol{j}\boldsymbol{k}}(\mathrm{d}\boldsymbol{u})
\end{align*}
and that $(p_{\boldsymbol{j}})_{\boldsymbol{j}}$ uniquely solves the following system of inhomogeneous Volterra integral equations:
\begin{align*}
p_{\boldsymbol{j}}(\boldsymbol{t})
&=
\mathds{1}_{\{z_0 = j_1 = j_2\}}
+
\mathds{1}_{\{z_0 = j_1\}} \big(p_{j_2}(t_2) - \mathds{1}_{\{z_0 = j_2\}}\big)
+
\mathds{1}_{\{z_0 = j_2\}} \big(p_{j_1}(t_1) - \mathds{1}_{\{z_0 = j_1\}}\big) \\
&\quad+\hspace{-4mm}
\sum_{k_1,k_2\in\mathcal{J}\atop k_1 \neq j_1, k_2 \neq j_2}\hspace{-2mm}
\int_{\boldsymbol{0}}^{\boldsymbol{t}}
\Big(
p_{\boldsymbol{k}}(\boldsymbol{s-})\Lambda_{\boldsymbol{k}\boldsymbol{j}}(\mathrm{d}\boldsymbol{s})
-
p_{(k_1,j_2)}(\boldsymbol{s-})\Lambda_{(k_1,j_2)(j_1,k_2)}(\mathrm{d}\boldsymbol{s}) \\
&\hspace{28mm}-
p_{(j_1,k_2)}(\boldsymbol{s-})\Lambda_{(j_1,k_2)(j_2,k_1)}(\mathrm{d}\boldsymbol{s})
+
p_{\boldsymbol{j}}(\boldsymbol{s}-)\Lambda_{\boldsymbol{j}\boldsymbol{k}}(\mathrm{d}\boldsymbol{s})
\Big),
\end{align*}
where $\boldsymbol{j} = (j_1,j_2) \in \mathcal{J} \times \mathcal{J}$.
\end{proposition}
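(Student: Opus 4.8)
The plan is to treat the scaling $H$ as a deterministic weight that is generated at the single exercise transition, and then to reduce everything to the unscaled bivariate machinery of~\cite{BathkeChristiansen2024}. Starting from $A(\mathrm{d}t) = \amsmathbb{E}[H(t)B^\circ(\mathrm{d}t)]$, I would split the sojourn and transition contributions according to whether the occupied state lies in $\mathcal{J}_0$ or in $\mathcal{J}_1$. The decisive observation is that on $\{Z_{t-} = j\}$ with $j \in \mathcal{J}_0$ one has $\tau \geq t$, so that $H(t) = 1$ up to the event $\{\tau = t\}$, which is negligible against $B_j(\mathrm{d}t)$ by the standing assumption $\amsmathbb{P}(\tau = t)B_j(\mathrm{d}t) = 0$; whereas on $\{Z_{t-} = j_2\}$ with $j_2 \in \mathcal{J}_1$ one has $\tau < t$ and $H(t) = \rho(\tau, Z_{\tau-}, Z_\tau)$.

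For the pre-exercise part this yields the first line directly: the sojourn terms contribute $p_j(s-)B_j(\mathrm{d}s)$; the intra-$\mathcal{J}_0$ transitions carry $H \equiv 1$ and contribute $b_{jk}(s)p_j(s-)\Lambda_{jk}(\mathrm{d}s)$ through $p_{jk}(\mathrm{d}s) = p_j(s-)\Lambda_{jk}(\mathrm{d}s)$; and each exercise transition $j \to k$ with $j \in \mathcal{J}_0$, $k \in \mathcal{J}_1$ occurs exactly at $\tau = s$ with $H(s) = \rho(s,j,k)$, so that the deterministic factor $\rho$ may be pulled out to give $\rho(s,j,k)b_{jk}(s)p_j(s-)\Lambda_{jk}(\mathrm{d}s)$.

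The core of the argument is the post-exercise part. On $\{Z_{s-} = j_2\}$ with $j_2 \in \mathcal{J}_1$ I would rewrite the scaling as $\rho(\tau, Z_{\tau-}, Z_\tau) = \sum_{j_1 \in \mathcal{J}_0,\, k_1 \in \mathcal{J}_1} \int_{(0,s)} \rho(u_1, j_1, k_1)\, N_{j_1 k_1}(\mathrm{d}u_1)$, the integral selecting the unique $\mathcal{J}_0 \to \mathcal{J}_1$ jump. Expanding, for the sojourn line, the remaining indicator $\mathds{1}_{\{Z_{s-} = j_2\}}$ through the key identity~\eqref{eq:key_identity} (whereas for the last line the exercise counter pairs directly with $N_{j_2 k_2}(\mathrm{d}s)$), taking expectations, and invoking the bivariate factorization $p_{\boldsymbol{j}\boldsymbol{k}}(\mathrm{d}\boldsymbol{s}) = p_{\boldsymbol{j}}(\boldsymbol{s}-)\Lambda_{\boldsymbol{j}\boldsymbol{k}}(\mathrm{d}\boldsymbol{s})$ of~\cite{BathkeChristiansen2024}, each product of two counting processes becomes a two-dimensional $\Lambda$-integral; since $z_0 \in \mathcal{J}_0$ the boundary term $\mathds{1}_{\{j_2 = z_0\}}$ vanishes, and since $\mathcal{J}_1$ is absorbing the outgoing part ranges only over $k_2 \in \mathcal{J}_1$, leaving precisely the two displayed double sums. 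I expect the bookkeeping here to be the main obstacle: one must track which counter is incoming and which is outgoing in order to read off the correct index quadruple in $\Lambda_{\boldsymbol{j}\boldsymbol{k}}$, and one must check that the support of the relevant bivariate measure forces $u_1 \leq u_2$ — the exercise preceding any post-exercise transition — which is what makes the stated outer limit $t$ on $u_1$ harmless. The strengthened moment condition $\amsmathbb{E}[N_{jk}(t)^2] < \infty$ secures finiteness of the $p_{\boldsymbol{j}\boldsymbol{k}}$ and legitimizes the Fubini interchanges, while the determinism of $\rho$ is exactly what lets it enter as the deterministic integrand $\rho(u_1, j_1, k_1)$.

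For the Volterra characterization I would apply the key identity~\eqref{eq:key_identity} to each factor of $p_{\boldsymbol{j}}(\boldsymbol{t}) = \amsmathbb{E}[\mathds{1}_{\{Z_{t_1} = j_1\}}\mathds{1}_{\{Z_{t_2} = j_2\}}]$ and multiply out. The constant-by-constant term produces $\mathds{1}_{\{z_0 = j_1 = j_2\}}$; the two mixed terms produce the one-dimensional corrections $\mathds{1}_{\{z_0 = j_1\}}(p_{j_2}(t_2) - \mathds{1}_{\{z_0 = j_2\}})$ and its mirror image; and the product of the two counting-process sums, after taking expectations and applying the factorization once more, produces the four-term double integral against $\Lambda$. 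This establishes that $(p_{\boldsymbol{j}})_{\boldsymbol{j}}$ solves the system; uniqueness follows from the linear Volterra structure as in~\cite{BathkeChristiansen2024}, with the marginals $p_{j_1}, p_{j_2}$ regarded as already pinned down by Proposition~\ref{prop:rep1d}.
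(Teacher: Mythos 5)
Your proposal is correct, but there is no in-text proof to compare it against: the paper states this proposition without proof, as a summary and adaptation of Example~2.4, Section~7, Theorem~3.3, and Theorem~4.3 of \cite{BathkeChristiansen2024}, so your blind derivation amounts to reconstructing the cited machinery. The reconstruction is sound and matches the style the paper itself uses in the one-dimensional case (Proposition~\ref{prop:rep1d}) and in the links~\eqref{eq:def_C_2d}: you correctly isolate the pathwise identity $H(s)\mathds{1}_{\{Z_{s-}=j_2\}}=\big(\sum_{j_1\in\mathcal{J}_0,\,k_1\in\mathcal{J}_1}\int_{(0,s)}\rho(u_1,j_1,k_1)\,N_{j_1k_1}(\mathrm{d}u_1)\big)\mathds{1}_{\{Z_{s-}=j_2\}}$ for $j_2\in\mathcal{J}_1$, expand the remaining indicator via~\eqref{eq:key_identity} with the boundary term vanishing because $z_0\in\mathcal{J}_0$, restrict the outgoing sum to $k_2\in\mathcal{J}_1$ by absorption while keeping $k_2\in\mathcal{J}$ in the incoming sum (the $k_2\in\mathcal{J}_0$ terms carry the diagonal mass $u_1=u_2$ of the exercise jump itself), and you identify precisely why each hypothesis enters: $\amsmathbb{E}[N_{jk}(t)^2]<\infty$ yields, via Cauchy--Schwarz, the integrability of products of counters needed for Fubini, while determinism of $\rho$ is what allows it to pass outside the expectation as an integrand against $p_{\boldsymbol{j}\boldsymbol{k}}(\mathrm{d}\boldsymbol{u})=p_{\boldsymbol{j}}(\boldsymbol{u}-)\Lambda_{\boldsymbol{j}\boldsymbol{k}}(\mathrm{d}\boldsymbol{u})$. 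Your support observation $u_1\leq u_2$ (strict for the transition-payment term, since $Z$ has at most one transition per time point and $j_1\in\mathcal{J}_0$, $j_2\in\mathcal{J}_1$) is exactly what renders the upper limits $(t,s-)$ and $(t,t)$ harmless, and your handling of $\{\tau=t\}$ against $B_j(\mathrm{d}t)$ on the $\mathcal{J}_0$ side via the standing assumption $\amsmathbb{P}(\tau=t)B_j(\mathrm{d}t)=0$ is the right use of that assumption.

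Two remarks. First, delegating uniqueness of the Volterra system to its linear structure is legitimate and is effectively what the paper does as well, via the Peano-series representation it invokes later (Lemma~A.1 in \cite{Bathke2024}). Second, your expansion of the product of the two key identities in fact exposes an index typo in the stated system: under the paper's own convention, visible in the cash-flow display where the incoming term reads $\Lambda_{(j_1,k_2)(k_1,j_2)}$, the cross term $\amsmathbb{E}[N_{j_1k_1}(t_1)N_{k_2j_2}(t_2)]$ corresponds to $\Lambda_{(j_1,k_2)(k_1,j_2)}$ (first coordinate $j_1\to k_1$, second coordinate $k_2\to j_2$), not to the printed $\Lambda_{(j_1,k_2)(j_2,k_1)}$; your bookkeeping produces the correct quadruple.
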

In comparing the one- and two-dimensional representations, we note that the two-dimensional representation needs further technical assumptions and is more involved, in particular from a numerical point of view. On the other hand, it achieves true separation between the probabilistic components and the contract elements, that is the payments and the scaling factors. This is convenient if one needs to calculate the expected accumulated cash flow across many different contract specifications. It also allows for a more clear separation between valuation and estimation in accordance with common actuarial workflows.

\section{Non-parametric estimation}\label{sec:num}

{Recall that we do not observe $(Z_t)_{0\leq t < \infty}$, but instead the pair $\big((Z_t)_{0\leq t\leq R}, \eta \wedge R\big)$, where $\eta$ is the (possible infinite) absorption time of $Z$ and $R$ describes right-censoring.} Under such an incomplete observation scheme, the behavior of $Z$ beyond the right endpoint of the support of $R$ of course remains elusive. In the following, we thus fix a $\theta$ strictly below this right endpoint and focus on the time interval $[0,\theta]$. The extension to the maximal interval is not pursued.

Throughout, we shall assume that right-censoring is entirely random, meaning that $Z$ is independent of $R$. This is a central assumption which can, essentially, only be weakened in case $Z$ is Markov. It implies that
\begin{align}\label{eq:lambda1d_c}
\Lambda^\rho_{jk}(t)
&=
\int_0^t \frac{1}{p^{\rho,\texttt{c}}_j(s-)} p^{\rho,\texttt{c}}_{jk}(\mathrm{d}s), &&j,k \in \mathcal{J}, j \neq k, \\ \label{eq:lambda2d_c}
\Lambda_{\boldsymbol{j}\boldsymbol{k}}(\boldsymbol{t})
&=
\int_{\boldsymbol{0}}^{\boldsymbol{t}}\frac{1}{p^{\texttt{c}}_{\boldsymbol{j}}(\boldsymbol{s-})} p^{\texttt{c}}_{\boldsymbol{j}\boldsymbol{k}}(\mathrm{d}\boldsymbol{s}), &&j_1,k_1,j_2,k_2\in\mathcal{J}, j_1 \neq k_1, j_2 \neq k_2,
\end{align}
for $t$, $t_1$, and $t_2$ below $\theta$ and with
\begin{align*}
p^{\rho,\texttt{c}}_j(t)
&=
\amsmathbb{E}[H(t)\mathds{1}_{\{Z_t = j\}}\mathds{1}_{\{t < R\}}],
&&p^{\rho,\texttt{c}}_{jk}(t)
=
\amsmathbb{E}\Big[\int_0^t H(s) \mathds{1}_{\{s \leq R\}} N_{jk}(\mathrm{d}s)\Big], \\
p^{\texttt{c}}_{\boldsymbol{j}}(\boldsymbol{t})
&=
\amsmathbb{E}[\mathds{1}_{\{Z_{t_1} = j_1\}}\mathds{1}_{\{Z_{t_2} = j_2\}}\mathds{1}_{\{\boldsymbol{t} < R\}}],
&&p^{\texttt{c}}_{\boldsymbol{j}\boldsymbol{k}}(\boldsymbol{t})
=
\amsmathbb{E}[N_{j_1k_1}(t_1 \wedge R)N_{j_2k_2}(t_2 \wedge R)].
\end{align*}
In the following, the non-scaled case of each $\rho(\cdot,j,k)$ being constantly equal to one is indicated by dropping the superscript $\rho$, which is also in accordance with previous definitions and notation.

The components in~\eqref{eq:lambda1d_c} and~\eqref{eq:lambda2d_c} are natural targets for estimation. However, the fact that the denominators might become zero is somewhat of a technical menace. We follow the suggestion of~\cite{BladtFurrer2024} to change the target of estimation by means of perturbation. That is, we instead consider
\begin{align*}
\Lambda^{\rho,\varepsilon}_{jk}(t)
&=
\int_0^t \frac{1}{p^{\rho,\texttt{c}}_j(s-) \vee \varepsilon} p^{\rho,\texttt{c}}_{jk}(\mathrm{d}s), &&j,k \in \mathcal{J}, j \neq k, \\
\Lambda^\varepsilon_{\boldsymbol{j}\boldsymbol{k}}(\boldsymbol{t})
&=
\int_{\boldsymbol{0}}^{\boldsymbol{t}}\frac{1}{p^{\texttt{c}}_{\boldsymbol{j}}(\boldsymbol{s-}) \vee \varepsilon} p^{\texttt{c}}_{\boldsymbol{j}\boldsymbol{k}}(\mathrm{d}\boldsymbol{s}), &&j_1,k_1,j_2,k_2\in\mathcal{J}, j_1 \neq k_1, j_2 \neq k_2,
\end{align*}
for some $\varepsilon>0$. We retain the choice of diagonal, that is we let $\Lambda^{\rho,\varepsilon}_{jj}$ be given by
\begin{align*}
-\Lambda^{\rho,\varepsilon}_{jj} &= \sum_{k \in \mathcal{J}\atop k \neq j} \Lambda^\varepsilon_{jk}, &&j \in \mathcal{J}_0, \\
-\Lambda^{\rho,\varepsilon}_{jj} &= \sum_{k \in \mathcal{J} \atop k \neq j} \Lambda^{\rho,\varepsilon}_{jk}, &&j \in \mathcal{J}_1.
\end{align*}
Furthermore, we set
\begin{align*}
p_j^{\rho,\varepsilon}(t) = \bigg[\Prodi_0^t \big(\text{Id} + \Lambda^{\rho,\varepsilon}(\mathrm{d}s)\big)\bigg]_{z_0j}, &&j\in\mathcal{J}.
\end{align*}
Finally, we define $(p^\varepsilon_{\boldsymbol{j}})_{\boldsymbol{j}}$  as the unique solution to the following system of inhomogeneous Volterra integral equations:
\begin{align}\label{eq:p2d_epsilon}
\begin{split}
p^\varepsilon_{\boldsymbol{j}}(\boldsymbol{t})
&=
\mathds{1}_{\{z_0 = j_1 = j_2\}} \\
&\quad+
\mathds{1}_{\{z_0 = j_1\}} \big(p^\varepsilon_{j_2}(t_2) - \mathds{1}_{\{z_0 = j_2\}}\big)
+
\mathds{1}_{\{z_0 = j_2\}} \big(p^\varepsilon_{j_1}(t_1) - \mathds{1}_{\{z_0 = j_1\}}\big) \\ 
&\quad+\hspace{-4mm}
\sum_{k_1,k_2\in\mathcal{J}\atop k_1 \neq j_1, k_2 \neq j_2}\hspace{-2mm}
\int_{\boldsymbol{0}}^{(t,t)}
\Big(
p^\varepsilon_{\boldsymbol{k}}(\boldsymbol{s-})\Lambda^\varepsilon_{\boldsymbol{k}\boldsymbol{j}}(\mathrm{d}\boldsymbol{s})
-
p^\varepsilon_{(k_1,j_2)}(\boldsymbol{s-})\Lambda^\varepsilon_{(k_1,j_2)(j_1,k_2)}(\mathrm{d}\boldsymbol{s}) \\ 
&\hspace{28mm}-
p^\varepsilon_{(j_1,k_2)}(\boldsymbol{s-})\Lambda^\varepsilon_{(j_1,k_2)(j_2,k_1)}(\mathrm{d}\boldsymbol{s})
+
p^\varepsilon_{\boldsymbol{j}}(\boldsymbol{s}-)\Lambda^\varepsilon_{\boldsymbol{j}\boldsymbol{k}}(\mathrm{d}\boldsymbol{s})
\Big),
\end{split}
\end{align}
where $\boldsymbol{j} = (j_1,j_2) \in \mathcal{J} \times \mathcal{J}$.

In the limit $\varepsilon \to 0$, we recover the original expressions, see also Theorem~5.10 in~\cite{Bathke2024}. {For the transfer of asymptotic results, see Remark~2 and preceding examples in~\cite{BladtFurrer2024}.}

The classic identity~\eqref{eq:key_identity} allows us to establish the following link between the one-dimensional components:
\begin{align}\label{eq:key_1dJ0}
p^{\rho,\texttt{c}}_j(t)
&=
\mathds{1}_{\{j = z_0\}}
-
C^\rho_j(t)
+
\sum_{k \in \mathcal{J}_0\atop k \neq j} \big(p^{\rho,\texttt{c}}_{kj}(t)-p^{\rho,\texttt{c}}_{jk}(t)\big)
-
\sum_{k \in \mathcal{J}_1} p^{\texttt{c}}_{jk}(t), &&j\in\mathcal{J}_0, \\ \label{eq:key_1dJ1}
p^{\rho,\texttt{c}}_j(t)
&=
-
C^\rho_j(t)
+
\sum_{k\in\mathcal{J}_0} p^{\rho,\texttt{c}}_{kj}(t)
+
\sum_{k\in\mathcal{J}_1\atop k \neq j} \big(p^{\rho,\texttt{c}}_{kj}(t) - p^{\rho,\texttt{c}}_{jk}(t)\big), &&j\in\mathcal{J}_1,
\end{align}
where $C^\rho_j$ is defined according to
\begin{align*}
C^\rho_j(t) &= \amsmathbb{E}[H(R) \mathds{1}_{\{Z_R = j\}} \mathds{1}_{\{R \leq t\}}], &&j\in\mathcal{J}.
\end{align*}
Similarly, in regards to the two-dimensional case we have
\begin{align}\nonumber
p_{\boldsymbol{j}}^{\texttt{c}}(\boldsymbol t)
&=
\mathds{1}_{\{z_0 = j_1 = j_2\}} -\mathds{1}_{\{z_0 = j_1 = j_2\}}\amsmathbb{E}[\mathds{1}_{\{R\leq t_1\vee R\leq t_2\}}]  \\ \nonumber
&\quad+\mathds{1}_{\{z_0=j_2\}}\Big(\sum_{k\in\mathcal{J}\atop k\neq j_1}p^{\texttt{c}}_{kj_1}(t_1)-p^\texttt{c}_{j_1k}(t_1)\Big)
+\mathds{1}_{\{z_0=j_1\}}\Big(\sum_{k\in\mathcal{J}\atop k\neq j_2}p^{\texttt{c}}_{kj_2}(t_2)-p^{\texttt{c}}_{j_2k}(t_2)\Big) \\ \label{eq:def_C_2d}
&\quad-\mathds{1}_{\{z_0=j_2\}}\Big(\sum_{k\in\mathcal{J}\atop k\neq j_1}p^{\texttt{c},1}_{kj_1}(\boldsymbol t)-p^{\texttt{c},1}_{j_1k}(\boldsymbol t)\Big)
-\mathds{1}_{\{z_0=j_1\}}\Big(\sum_{k\in\mathcal{J}\atop k\neq j_2}p^{\texttt{c},2}_{kj_2}(\boldsymbol t)-p^{\texttt{c},2}_{j_2k}(\boldsymbol t)\Big)\\ \nonumber
&\quad+\hspace{-2mm}\sum_{k_1,k_2\in\mathcal{J}\atop k_1\neq j_1,k_2\neq j_2}\hspace{-2mm}\Big(p^{\texttt{c}}_{\boldsymbol k\boldsymbol j}(\boldsymbol t)+p^{\texttt{c}}_{\boldsymbol j\boldsymbol k}(\boldsymbol t)-p^{\texttt{c}}_{(j_1k_2)(k_1j_2)}(\boldsymbol t)-p^{\texttt{c}}_{(k_1j_2)(j_1k_2)}(\boldsymbol t)\Big) \\ \nonumber
&\quad-\hspace{-2mm}\sum_{k_1,k_2\in\mathcal{J}\atop k_1\neq j_1,k_2\neq j_2}\hspace{-2mm}\Big(p^{\texttt{c},3}_{\boldsymbol k\boldsymbol j}(\boldsymbol t)+p^{\texttt{c},3}_{\boldsymbol j\boldsymbol k}(\boldsymbol t)-p^{\texttt{c},3}_{(j_1k_2)(k_1j_2)}(\boldsymbol t)-p^{\texttt{c},3}_{(k_1j_2)(j_1k_2)}(\boldsymbol t)\Big),
\end{align}
where $p_{kj}^{\texttt{c},1}(\boldsymbol t)=\amsmathbb{E}[N_{kj}(t_1\wedge R)\mathds{1}_{\{R\leq t_1\vee R\leq t_2\}}]$, $p_{kj}^{\texttt{c},2}(\boldsymbol t)=\amsmathbb{E}[N_{kj}(t_2\wedge R)\mathds{1}_{\{R\leq t_1\vee R\leq t_2\}}]$ and $p^{\texttt{c},3}_{\boldsymbol k\boldsymbol j}(\boldsymbol t)=\amsmathbb{E}[N_{k_1j_1}(t_1\wedge R)N_{k_2j_2}(t_2\wedge R)\mathds{1}_{\{R\leq t_1\vee R\leq t_2\}}]$.

\subsection{Estimators}

Consider {once again $n$} i.i.d.\ replicates $\big((Z^\ell_t)_{0\leq t \leq R^\ell},\eta^\ell \wedge R^\ell\big)_{\ell=1}^n$ and form the following empirical processes
\begin{align*}
\amsmathbb{I}^\rho_j(t)&= 
\frac{1}{n} \sum_{\ell=1}^n H^\ell(t) \mathds{1}_{\{Z^\ell_t = j\}} \mathds{1}_{\{t < R^\ell\}}, \\
\amsmathbb{N}^\rho_{jk}(t) &= \frac{1}{n}\sum_{\ell=1}^n \int_0^t H^\ell(s) \mathds{1}_{\{s \leq R^\ell\}} N^\ell_{jk}(\mathrm{d}s), \\
\amsmathbb{I}_{\boldsymbol{j}}(\boldsymbol{t}) &= \frac{1}{n} \sum_{\ell=1}^n  \mathds{1}_{\{Z^\ell_{t_1} = j_1\}}\mathds{1}_{\{Z^\ell_{t_2} = j_2\}} \mathds{1}_{\{\boldsymbol{t} < R^\ell\}}, \\
\amsmathbb{N}_{\boldsymbol{j}\boldsymbol{k}}(\boldsymbol{t}) &= \frac{1}{n} \sum_{\ell=1}^n N^\ell_{j_1k_1}(t_1 \wedge R^\ell) N^\ell_{j_2k_2}(t_2 \wedge R^\ell{)}
\end{align*}
as non-parametric estimators of $p^{\rho,\texttt{c}}_j$, $p^{\rho,\texttt{c}}_{jk}$, $p^{\texttt{c}}_{\boldsymbol{j}}$, and $p^{\texttt{c}}_{\boldsymbol{j}\boldsymbol{k}}$, respectively. Similar to~\eqref{eq:key_1dJ0} and~\eqref{eq:key_1dJ1}, we find the following link for the one-dimensional case:
\begin{align} \label{eq:key_1dJ0emp}
\amsmathbb{I}^\rho_j(t)
&=
\mathds{1}_{\{j = z_0\}}
-
\amsmathbb{C}_j^\rho(t)
+
\sum_{k \in \mathcal{J}_0\atop k \neq j} \big(\amsmathbb{N}^\rho_{kj}(t)-\amsmathbb{N}^\rho_{jk}(t)\big)
-
\sum_{k \in \mathcal{J}_1} \amsmathbb{N}_{jk}(t), &&j\in\mathcal{J}_0, \\ \label{eq:key_1dJ1emp}
\amsmathbb{I}^\rho_j(t)
&=
-
\amsmathbb{C}^\rho_j(t)
+
\sum_{k\in\mathcal{J}_0} \amsmathbb{N}^\rho_{kj}(t)
+
\sum_{k\in\mathcal{J}_1\atop k \neq j} \big(\amsmathbb{N}^\rho_{kj}(t) - \amsmathbb{N}^\rho_{jk}(t)\big), &&j\in\mathcal{J}_1,
\end{align}
where $\amsmathbb{C}^\rho_j$ is defined according to
\begin{align*}
\amsmathbb{C}^\rho_j(t) &= \frac{1}{n} \sum_{\ell=1}^n H^\ell(R^\ell) \mathds{1}_{\{Z^\ell_{R^\ell} = j\}} \mathds{1}_{\{R^\ell \leq t\}}, &&j\in\mathcal{J}.
\end{align*}
Similarly, in regards to the two-dimensional case we have
\begin{align}\nonumber
\amsmathbb{I}_{\boldsymbol{j}}(\boldsymbol t)
&= \mathds{1}_{\{z_0=j_1=j_2\}} \\ \nonumber
&\quad+\mathds{1}_{\{z_0=j_2\}}\Big(\sum_{k\in\mathcal{J}\atop k\neq j_1}\amsmathbb{N}_{kj_1}(t_1)-\amsmathbb{N}_{j_1k}(t_1)\Big) \\ \nonumber
&\quad+\mathds{1}_{\{z_0=j_1\}}\Big(\sum_{k\in\mathcal{J}\atop k\neq j_2}\amsmathbb{N}_{kj_2}(t_2)-\amsmathbb{N}_{j_2k}(t_2)\Big) \\ \label{eq:def_C_2f}
&\quad-\mathds{1}_{\{z_0=j_2\}}\Big(\sum_{k\in\mathcal{J}\atop k\neq j_1}\amsmathbb{N}^1_{kj_1}(\boldsymbol t)-\amsmathbb{N}^1_{j_1k}(\boldsymbol t)\Big) \\ \nonumber
&\quad-\mathds{1}_{\{z_0=j_1\}}\Big(\sum_{k\in\mathcal{J}\atop k\neq j_2}\amsmathbb{N}^2_{kj_2}(\boldsymbol t)-\amsmathbb{N}^2_{j_2k}(\boldsymbol t)\Big) \\ \nonumber
&\quad+\hspace{-2mm}\sum_{k_1,k_2\in\mathcal{J}\atop k_1\neq j_1,k_2\neq j_2}\hspace{-2mm}\Big(\amsmathbb{N}_{\boldsymbol k\boldsymbol j}(\boldsymbol t)+\amsmathbb{N}_{\boldsymbol j\boldsymbol k}(\boldsymbol t)-\amsmathbb{N}_{(j_1k_2)(k_1j_2)}(\boldsymbol t)-\amsmathbb{N}_{(k_1j_2)(j_1k_2)}(\boldsymbol t)\Big) \\ \nonumber
&\quad-\hspace{-2mm}\sum_{k_1,k_2\in\mathcal{J}\atop k_1\neq j_1,k_2\neq j_2}\hspace{-2mm}\Big(\amsmathbb{N}^3_{\boldsymbol k\boldsymbol j}(\boldsymbol t)+\amsmathbb{N}^3_{\boldsymbol j\boldsymbol k}(\boldsymbol t)-\amsmathbb{N}^3_{(j_1k_2)(k_1j_2)}(\boldsymbol t)-\amsmathbb{N}^3_{(k_1j_2)(j_1k_2)}(\boldsymbol t)\Big),
\end{align}
where the estimators are defined according to
\begin{align*}
\amsmathbb{N}_{jk}(t)
&=
\frac{1}{n}\sum_{\ell=1}^n    N^\ell_{jk}(t\wedge R^\ell), \\
\amsmathbb{N}_{jk}^1(\boldsymbol t)
&=\frac{1}{n}\sum_{\ell=1}^n N^\ell_{jk}(t_1\wedge R^\ell)\mathds{1}_{\{R^\ell\leq t_1\vee R^\ell\leq t_2\}}, \\
\amsmathbb{N}_{jk}^2(\boldsymbol t)
&=\frac{1}{n}\sum_{\ell=1}^n N^\ell_{jk}(t_2\wedge R^\ell)\mathds{1}_{\{R^\ell\leq t_1\vee R^\ell\leq t_2\}}, \\
\amsmathbb{N}_{\boldsymbol{jk}}^3(\boldsymbol t)
&=
\frac{1}{n}\sum_{\ell=1}^n N^\ell_{j_1k_1}(t_1\wedge R^\ell)N^\ell_{j_2k_2}(t_2\wedge R^\ell)\mathds{1}_{\{R^\ell\leq t_1\vee R^\ell\leq t_2\}}.
\end{align*} 
The above identities are used repeatedly in the proofs, but may also be helpful for the development of efficient implementations.

We are now ready to form the estimators of interest, which will be of Nelson--Aalen and Aalen--Johansen type. The first estimator $\mathbb{\Lambda}^{\rho,\varepsilon}$ reads
\begin{align*}
\mathbb{\Lambda}^{\rho,\varepsilon}_{jk}(t)
=
\int_0^t \frac{1}{\amsmathbb{I}^\rho_j(s-) \vee \varepsilon}  \amsmathbb{N}^\rho_{jk}(\mathrm{d}s)
\end{align*}
and targets $\Lambda^{\rho,\varepsilon}_{jk}$. We also define
\begin{align*}
-\mathbb{\Lambda}^{\rho,\varepsilon}_{jj} &= \sum_{k \in \mathcal{J}\atop k \neq j} \mathbb{\Lambda}^\varepsilon_{jk}, &&j \in \mathcal{J}_0, \\
-\mathbb{\Lambda}^{\rho,\varepsilon}_{jj} &= \sum_{k \in \mathcal{J} \atop k \neq j} \mathbb{\Lambda}^{\rho,\varepsilon}_{jk}, &&j \in \mathcal{J}_1,
\end{align*}
where $\mathbb{\Lambda}^\varepsilon_{jk}$ is the ordinary estimator, corresponding to $\mathbb{\Lambda}^{\rho,\varepsilon}_{jk}$ without scaling, that is $H \equiv 1$. The associated estimator $\mathbb{p}^{\rho,\varepsilon}_j$ reads
\begin{align*}
\mathbb{p}^{\rho,\varepsilon}_j(t)
=
\bigg[\Prodi_0^t \big(\text{Id} + \mathbb{\Lambda}^{\rho,\varepsilon}(\mathrm{d}s)\big)\bigg]_{z_0j}
\end{align*}
and targets $p^{\rho,\varepsilon}_j$. The next estimator $\mathbb{\Lambda}^\varepsilon_{\boldsymbol{j}\boldsymbol{k}}$ reads
\begin{align*}
\mathbb{\Lambda}^\varepsilon_{\boldsymbol{j}\boldsymbol{k}}(\boldsymbol{t})
=
\int_{\boldsymbol{0}}^{\boldsymbol{t}} \frac{1}{\amsmathbb{I}_{\boldsymbol{j}}(\boldsymbol{s-}) \vee \varepsilon} \amsmathbb{N}_{\boldsymbol{j}\boldsymbol{k}}(\mathrm{d}\boldsymbol{s})
\end{align*}
and targets $\Lambda^\varepsilon_{\boldsymbol{j}\boldsymbol{k}}$. The associated estimator $\mathbb{p}^\varepsilon_{\boldsymbol{j}}$, which targets $p^\varepsilon_{\boldsymbol{j}}$, is defined as the unique solution to~\eqref{eq:p2d_epsilon}, but with all the theoretical quantities replaced by their just introduced estimators.

The perturbation given by $\varepsilon$ only plays a role in the theoretical analyses of the estimators. In applications, where $n$ is of course fixed, one takes $\varepsilon$ sufficiently close to zero as to not affect the actual estimates, which actually corresponds to $\varepsilon = 0$. 

Let $0=t_0 < t_1 < t_2 < \cdots < t_M$ describe all observed event times in $[0,\theta]$. The estimators are all constant between these time points and, consequently, best described through difference equations. In regards to the one-dimensional case, it holds that $\mathbb{p}^{\rho,\varepsilon}_j(t_0) = \mathds{1}_{\{z_0 = j\}}$ and
\begin{align*}
\Delta\mathbb{p}^{\rho,\varepsilon}_j(t_{m+1})
=
\sum_{k \in \mathcal{J}} \mathbb{p}^{\rho,\varepsilon}_k(t_m) \Delta\mathbb{\Lambda}^{\rho,\varepsilon}_{kj}(t_{m+1}).
\end{align*}
For the two-dimensional case, it holds that
\begin{align*}
\mathbb{p}^\varepsilon_{\boldsymbol{j}}(t_m, 0)
&=
\mathbb{p}^\varepsilon_{j_1}(t_m)\mathds{1}_{\{j_2 = z_0\}},
&&\mathbb{p}^\varepsilon_{\boldsymbol{j}}(0,t_m)
=
\mathbb{p}^\varepsilon_{j_2}(t_m)\mathds{1}_{\{j_1 = z_0\}},
\end{align*}
where $\mathbb{p}^\varepsilon_j$ is the ordinary estimator, corresponding to $\mathbb{p}^{\rho,\varepsilon}_j$ without scaling, that is $H \equiv 1$. Further, if we let
\begin{align*}
\Delta \mathbb{p}^\varepsilon_{\boldsymbol{j}}(t_{m_1+1},t_{m_2+1})
&=
\mathbb{p}^\varepsilon_{\boldsymbol{j}}(t_{m_1+1},t_{m_2+1}) \\
&\quad- \mathbb{p}^\varepsilon_{\boldsymbol{j}}(t_{m_1+1},t_{m_2}) - \mathbb{p}^\varepsilon_{\boldsymbol{j}}(t_{m_1},t_{m_2+1}) + \mathbb{p}^\varepsilon_{\boldsymbol{j}}(t_{m_1},t_{m_2}),
\end{align*}
we obtain
\begin{align*}
\Delta \mathbb{p}^\varepsilon_{\boldsymbol{j}}(t_{m_1+1},t_{m_2+1})
&=
\hspace{-2mm} \sum_{k_1,k_2\in\mathcal{J}\atop k_1 \neq j_1, k_2 \neq j_2} \hspace{-2mm} \mathbb{p}^\varepsilon_{\boldsymbol{k}}(t_{m_1},t_{m_2}) \Delta \mathbb{\Lambda}^{\varepsilon}_{\boldsymbol{k}\boldsymbol{j}}(t_{m_1+1},t_{m_2+1}) \\
&\quad-\hspace{-2mm} \sum_{k_1,k_2\in\mathcal{J}\atop k_1 \neq j_1, k_2 \neq j_2} \hspace{-2mm}\mathbb{p}^\varepsilon_{(k_1,j_2)}(t_{m_1},t_{m_2}) \Delta \mathbb{\Lambda}^{\varepsilon}_{(k_1,j_2)(j_1,k_2)}(t_{m_1+1},t_{m_2+1}) \\
&\quad-\hspace{-2mm} \sum_{k_1,k_2\in\mathcal{J}\atop k_1 \neq j_1, k_2 \neq j_2} \hspace{-2mm}\mathbb{p}^\varepsilon_{(j_1,k_2)}(t_{m_1},t_{m_2}) \Delta \mathbb{\Lambda}^{\varepsilon}_{(j_1,k_2)(k_1,j_2)}(t_{m_1+1},t_{m_2+1}) \\
&\quad-\mathbb{p}^\varepsilon_{\boldsymbol{j}}(t_{m_1},t_{m_2})\hspace{-2mm} \sum_{k_1,k_2\in\mathcal{J}\atop k_1 \neq j_1, k_2 \neq j_2} \hspace{-2mm}\Delta \mathbb{\Lambda}^{\varepsilon}_{\boldsymbol{j}\boldsymbol{k}}(t_{m_1+1},t_{m_2+1})
\end{align*}
with $\mathbb{\Lambda}^\varepsilon_{\boldsymbol{j}\boldsymbol{k}}(t_{m_1}, 0) = 0$, $\mathbb{\Lambda}^\varepsilon_{\boldsymbol{j}\boldsymbol{k}}(0,t_{m_2}) = 0$, and
\begin{align*}
\Delta\mathbb{\Lambda}^\varepsilon_{\boldsymbol{j}\boldsymbol{k}}(t_{m_1+1},t_{m_2+1})
=
\frac{1}{\amsmathbb{I}_{\boldsymbol{j}}(t_{m_1},t_{m_2}) \vee \varepsilon} \Delta \amsmathbb{N}_{\boldsymbol{j}\boldsymbol{k}}(t_{m_1+1},t_{m_2+1}),
\end{align*}
where $\Delta \amsmathbb{N}_{\boldsymbol{j}\boldsymbol{k}}$ reads
\begin{align*}
\Delta \amsmathbb{N}_{\boldsymbol{j}\boldsymbol{k}}(t_{m_1+1},t_{m_2+1})
=
\frac{1}{n}\sum_{\ell = 1}^n \Delta N^\ell_{j_1k_1}(t_{m_1+1} \wedge R^\ell) \Delta N^\ell_{j_2k_2}(t_{m_2+1} \wedge R^\ell).
\end{align*}
To efficiently calculate $\mathbb{p}^\varepsilon_{\boldsymbol{j}}$ on the relevant two-dimensional grid, one calculates simultaneously across $\boldsymbol{j} \in \mathcal{J} \times \mathcal{J}$ the gnomons 
\begin{align*}
\big(\mathbb{p}^\varepsilon_{\boldsymbol{j}}(t_{m+s},t_{m+1}),\mathbb{p}^\varepsilon_{\boldsymbol{j}}(t_{m+1},t_{m+s})\big)_{s=1,2,\ldots,M-m},
\end{align*}
progressing through $m$ from zero to $M-1$ and thus filling in the square. Note that the edges of the gnomons may be calculated in parallel. 

\subsection{Strong uniform consistency}\label{subsec:strong_unif_cons}

In Section~5 of~\cite{Bathke2024}, the approach of Section~3 in~\cite{BladtFurrer2024} was adapted to the two-dimensional case. In particular, using the link~\eqref{eq:def_C_2d} and the corresponding link~\eqref{eq:def_C_2f}, the following result was established:
\begin{theorem}
{Let $\varepsilon>0$.} Suppose that $\amsmathbb{E}[N_{jk}(\theta)^2] < \infty$ for $j,k\in\mathcal{J}$, $j \neq k$. It then holds that
\begin{align*}
&\sup_{0\leq \boldsymbol{t} \leq \theta} \big|
\mathbb{\Lambda}^\varepsilon_{\boldsymbol{j}\boldsymbol{k}}(\boldsymbol{t})
-
\Lambda^\varepsilon_{\boldsymbol{j}\boldsymbol{k}}(\boldsymbol{t})
\big|
\overset{\text{a.s.}}{\to} 0, &&n \to \infty, \\
&\sup_{0\leq \boldsymbol{t} \leq \theta} \big|
\mathbb{p}^\varepsilon_{\boldsymbol{j}}(\boldsymbol{t})
-
p^\varepsilon_{\boldsymbol{j}}(\boldsymbol{t})
\big|
\overset{\text{a.s.}}{\to} 0, &&n \to \infty.
\end{align*}
\end{theorem}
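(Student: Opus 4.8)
The plan is to build the result from the empirical building blocks upward, propagating consistency through the two maps that define the estimators: the Nelson--Aalen-type map producing $\mathbb{\Lambda}^\varepsilon_{\boldsymbol{j}\boldsymbol{k}}$ and the Volterra solution map producing $\mathbb{p}^\varepsilon_{\boldsymbol{j}}$. First I would establish a uniform strong law of large numbers for each of the empirical blocks appearing in~\eqref{eq:def_C_2f}, namely $\amsmathbb{N}_{jk}$, $\amsmathbb{N}^1_{jk}$, $\amsmathbb{N}^2_{jk}$, $\amsmathbb{N}^3_{\boldsymbol{j}\boldsymbol{k}}$, and $\amsmathbb{N}_{\boldsymbol{j}\boldsymbol{k}}$. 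Each is an average of i.i.d.\ terms that are nondecreasing in each time coordinate separately and dominated by the integrable envelope $N^\ell_{j_1k_1}(\theta)N^\ell_{j_2k_2}(\theta)$; this is exactly where $\amsmathbb{E}[N_{jk}(\theta)^2]<\infty$ enters, since by Cauchy--Schwarz it guarantees that these products are integrable and that the limiting quantities $p^{\texttt{c}}_{\boldsymbol{j}\boldsymbol{k}}$ are finite. For coordinatewise monotone integrable functions, pointwise convergence on a countable dense grid together with a two-dimensional P\'olya/Glivenko--Cantelli argument (refine the grid, control oscillation between grid points by monotonicity) upgrades the ordinary SLLN to a.s.\ uniform convergence on $[0,\theta]^2$. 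Using the population identity~\eqref{eq:def_C_2d} and its empirical counterpart~\eqref{eq:def_C_2f}, which express $p^{\texttt{c}}_{\boldsymbol{j}}$ and $\amsmathbb{I}_{\boldsymbol{j}}$ as the same fixed linear combination of these blocks, the triangle inequality then yields $\sup_{0\leq\boldsymbol{t}\leq\theta}|\amsmathbb{I}_{\boldsymbol{j}}(\boldsymbol{t})-p^{\texttt{c}}_{\boldsymbol{j}}(\boldsymbol{t})|\overset{\text{a.s.}}{\to}0$.

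Next I would transfer this to $\mathbb{\Lambda}^\varepsilon_{\boldsymbol{j}\boldsymbol{k}}$. Writing the difference $\mathbb{\Lambda}^\varepsilon_{\boldsymbol{j}\boldsymbol{k}}-\Lambda^\varepsilon_{\boldsymbol{j}\boldsymbol{k}}$ and inserting the mixed term $\int 1/(\amsmathbb{I}_{\boldsymbol{j}}(\boldsymbol{s-})\vee\varepsilon)\,p^{\texttt{c}}_{\boldsymbol{j}\boldsymbol{k}}(\mathrm{d}\boldsymbol{s})$ splits it into two pieces. The first integrates the bounded integrand $1/(\amsmathbb{I}_{\boldsymbol{j}}\vee\varepsilon)$ against $\amsmathbb{N}_{\boldsymbol{j}\boldsymbol{k}}-p^{\texttt{c}}_{\boldsymbol{j}\boldsymbol{k}}$; applying the bivariate integration-by-parts formula and bounding by $\sup|\amsmathbb{N}_{\boldsymbol{j}\boldsymbol{k}}-p^{\texttt{c}}_{\boldsymbol{j}\boldsymbol{k}}|$ times the (a.s.\ bounded, thanks to the $\varepsilon$-floor) total variation of the integrand controls it. The second piece uses that $x\mapsto 1/(x\vee\varepsilon)$ is Lipschitz with constant $\varepsilon^{-2}$, so it is bounded by $\varepsilon^{-2}\sup|\amsmathbb{I}_{\boldsymbol{j}}-p^{\texttt{c}}_{\boldsymbol{j}}|$ times the finite mass $p^{\texttt{c}}_{\boldsymbol{j}\boldsymbol{k}}([0,\theta]^2)$. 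Both bounds tend to zero a.s.\ by the previous step, giving the first assertion.

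Finally, for $\mathbb{p}^\varepsilon_{\boldsymbol{j}}$ I would exploit the linear Volterra structure~\eqref{eq:p2d_epsilon}. Subtracting the defining equations for $\mathbb{p}^\varepsilon_{\boldsymbol{j}}$ and $p^\varepsilon_{\boldsymbol{j}}$, the difference satisfies a linear bivariate Volterra equation driven by an inhomogeneity built from $\mathbb{\Lambda}^\varepsilon_{\boldsymbol{j}\boldsymbol{k}}-\Lambda^\varepsilon_{\boldsymbol{j}\boldsymbol{k}}$ and from the one-dimensional differences $\mathbb{p}^\varepsilon_{j}-p^\varepsilon_{j}$ (whose uniform a.s.\ convergence follows from the analogous, simpler, one-dimensional argument). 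A two-dimensional Gronwall/Picard-iteration estimate, relying on the finiteness of the total variations of the $\Lambda^\varepsilon$-kernels on $[0,\theta]^2$, shows that the solution map is uniformly continuous in the supremum norm, so uniform convergence of the driving terms propagates to $\sup_{0\leq\boldsymbol{t}\leq\theta}|\mathbb{p}^\varepsilon_{\boldsymbol{j}}-p^\varepsilon_{\boldsymbol{j}}|\overset{\text{a.s.}}{\to}0$.

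The hard part will be the two genuinely two-dimensional steps: the uniform SLLN for coordinatewise monotone processes, where the clean one-dimensional P\'olya argument must be replaced by a grid argument over the plane, and the Volterra stability estimate, where Gronwall's lemma on a rectangle produces iterated kernels whose total variations must be shown to be summable. Ensuring that the bivariate integration-by-parts bounds and the total-variation mass of the $\Lambda^\varepsilon$-kernels are uniform over $\boldsymbol{t}\in[0,\theta]^2$ — and tracking how the $\varepsilon$-floor keeps every integrand bounded — is where the bulk of the technical care lies.
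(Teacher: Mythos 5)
Your architecture coincides with the paper's route to this theorem: the paper does not reprove it but imports it from Section~5 of~\cite{Bathke2024}, where the argument is exactly to use the links~\eqref{eq:def_C_2d} and~\eqref{eq:def_C_2f} to reduce everything to uniform strong laws for the coordinatewise monotone empirical blocks (handled there by Glivenko--Cantelli-type results for monotone classes together with a truncation device for the unbounded envelope, as in the paper's one-dimensional proof --- your Cauchy--Schwarz observation correctly identifies where $\amsmathbb{E}[N_{jk}(\theta)^2]<\infty$ enters), and then to propagate through the Nelson--Aalen-type map and the Volterra solution map, the latter via the Peano-series representation (Lemma~A.1 of~\cite{Bathke2024}) rather than your Gronwall/Picard estimate; these two devices are interchangeable here, since the $\varepsilon$-floor caps the kernel masses by $\varepsilon^{-1}\amsmathbb{N}_{\boldsymbol{j}\boldsymbol{k}}(\theta,\theta)$, which is a.s.\ bounded in $n$.

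One step, however, fails as you state it. In the transfer to $\mathbb{\Lambda}^\varepsilon_{\boldsymbol{j}\boldsymbol{k}}$ you bound $\int \big(1/(\amsmathbb{I}_{\boldsymbol{j}}(\boldsymbol{s}-)\vee\varepsilon)\big)\,\mathrm{d}\big(\amsmathbb{N}_{\boldsymbol{j}\boldsymbol{k}}-p^{\texttt{c}}_{\boldsymbol{j}\boldsymbol{k}}\big)$ by the sup-distance times the \enquote{a.s.\ bounded, thanks to the $\varepsilon$-floor} total variation of the integrand. In two dimensions the relevant notion for integration by parts is Vitali/Hardy--Krause variation, and --- unlike in one dimension --- composition with the Lipschitz map $x\mapsto 1/(x\vee\varepsilon)$ does \emph{not} control it: the mixed second difference of $\phi\circ g$ picks up cross terms of the type $\phi''(g)\,\Delta_{10}g\,\Delta_{01}g$, so the variation of $1/(\amsmathbb{I}_{\boldsymbol{j}}\vee\varepsilon)$ is not $\varepsilon^{-2}$ times that of $\amsmathbb{I}_{\boldsymbol{j}}$ and need not stay bounded uniformly in $n$ without a separate argument. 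The standard repair is to insert the deterministic integrand first: split the term as $\int\big(1/(\amsmathbb{I}_{\boldsymbol{j}}\vee\varepsilon)-1/(p^{\texttt{c}}_{\boldsymbol{j}}\vee\varepsilon)\big)\,\mathrm{d}\big(\amsmathbb{N}_{\boldsymbol{j}\boldsymbol{k}}-p^{\texttt{c}}_{\boldsymbol{j}\boldsymbol{k}}\big)+\int\big(1/(p^{\texttt{c}}_{\boldsymbol{j}}\vee\varepsilon)\big)\,\mathrm{d}\big(\amsmathbb{N}_{\boldsymbol{j}\boldsymbol{k}}-p^{\texttt{c}}_{\boldsymbol{j}\boldsymbol{k}}\big)$. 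The first piece is at most $\varepsilon^{-2}\sup|\amsmathbb{I}_{\boldsymbol{j}}-p^{\texttt{c}}_{\boldsymbol{j}}|$ times the total masses $\amsmathbb{N}_{\boldsymbol{j}\boldsymbol{k}}(\theta,\theta)+p^{\texttt{c}}_{\boldsymbol{j}\boldsymbol{k}}(\theta,\theta)$, a.s.\ bounded by the strong law, so it vanishes by your step one; the second piece has a fixed, $n$-independent integrand whose finite Hardy--Krause variation can be verified once under the standing assumptions (for instance via the mixed-difference expansion, using that the link~\eqref{eq:def_C_2d} exhibits $p^{\texttt{c}}_{\boldsymbol{j}}$ as a finite linear combination of distribution functions of finite measures with uniformly bounded coordinate-section variations), and only there is your integration-by-parts bound legitimate. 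The same caution applies in your Volterra step whenever a random, $n$-dependent function is integrated against $\mathrm{d}(\mathbb{\Lambda}^\varepsilon-\Lambda^\varepsilon)$: argue term by term in the Peano/Picard expansion so that the sup-norm difference is always paired with deterministic iterated kernels, in the spirit of the Duhamel-type arguments of~\cite{GillVanDerLaanWellner1995}. With this repair your proof goes through and agrees in substance with the cited one.
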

The consistency of  $\mathbb{\Lambda}^{\rho,\varepsilon}_{jk}$ and $\mathbb{p}^{\rho,\varepsilon}_j$ may also be established using the approach of Section~3 in~\cite{BladtFurrer2024}. First note that
\begin{align}\label{eq:unif_cons_C}
&\sup_{0\leq t \leq \theta} \big| \amsmathbb{C}^\rho_j(t) - C^\rho_j(t) \big| \overset{\text{a.s.}}{\to} 0, &&n \to \infty.
\end{align}
This follows from the fact that the class of all uniformly bounded, monotone functions on the real line is Glivenko--Cantelli, confer with Theorem~2.4.1 and Theorem~2.7.5 in~\cite{VaartWellner1996}. The assumption that each $\rho(\cdot,j,k)$ is uniformly bounded on $[0,\theta]$ is here of essence.
\begin{theorem}\label{thm:rho_consistency}
{Let $\varepsilon>0$.} It holds that
\begin{align*}
&\sup_{0\leq t \leq \theta} \big|
\mathbb{\Lambda}^{\rho,\varepsilon}_{jk}(t)
-
\Lambda^{\rho,\varepsilon}_{jk}(t)
\big|
\overset{\text{a.s.}}{\to} 0, &&n \to \infty, \\
&\sup_{0\leq t \leq \theta} \big|
\mathbb{p}^{\rho,\varepsilon}_j(t)
-
p^{\rho,\varepsilon}_j(t)
\big|
\overset{\text{a.s.}}{\to} 0, &&n \to \infty.
\end{align*}
\end{theorem}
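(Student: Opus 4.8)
The plan is to establish consistency in two stages, mirroring the structure already used for the two-dimensional estimator and following the approach of Section~3 in~\cite{BladtFurrer2024}: first the Nelson--Aalen-type object $\mathbb{\Lambda}^{\rho,\varepsilon}_{jk}$, then the Aalen--Johansen-type object $\mathbb{p}^{\rho,\varepsilon}_j$ via the continuity of the product integral. The key preparatory observation is that uniform consistency of the building blocks is essentially free. Indeed, $\amsmathbb{I}^\rho_j$, $\amsmathbb{N}^\rho_{jk}$, and $\amsmathbb{N}_{jk}$ are empirical averages of i.i.d.\ terms that are uniformly bounded monotone (or finite-variation) functions on $[0,\theta]$, so the Glivenko--Cantelli argument invoked for~\eqref{eq:unif_cons_C} applies verbatim to each of them; here the standing assumption that $\rho(\cdot,j,k)$ is uniformly bounded on $[0,\theta]$, together with $\amsmathbb{E}[N_{jk}(\theta)]<\infty$, supplies the integrable envelope. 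Combining these with~\eqref{eq:unif_cons_C} and the identities~\eqref{eq:key_1dJ0emp}--\eqref{eq:key_1dJ1emp}, which express $\amsmathbb{I}^\rho_j$ as a fixed linear combination of the consistently estimated pieces, yields
\begin{align*}
\sup_{0\leq t\leq\theta}\big|\amsmathbb{I}^\rho_j(t)-p^{\rho,\texttt{c}}_j(t)\big|\overset{\text{a.s.}}{\to}0,
\qquad
\sup_{0\leq t\leq\theta}\big|\amsmathbb{N}^\rho_{jk}(t)-p^{\rho,\texttt{c}}_{jk}(t)\big|\overset{\text{a.s.}}{\to}0.
\end{align*}

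**First claim.**

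For the cumulative hazard-type estimator, I would write
\begin{align*}
\mathbb{\Lambda}^{\rho,\varepsilon}_{jk}(t)
=
\int_0^t\frac{1}{\amsmathbb{I}^\rho_j(s-)\vee\varepsilon}\,\amsmathbb{N}^\rho_{jk}(\mathrm{d}s),
\qquad
\Lambda^{\rho,\varepsilon}_{jk}(t)
=
\int_0^t\frac{1}{p^{\rho,\texttt{c}}_j(s-)\vee\varepsilon}\,p^{\rho,\texttt{c}}_{jk}(\mathrm{d}s),
\end{align*}
and show the difference converges uniformly to zero. The essential point is that the perturbation by $\varepsilon$ makes the map $x\mapsto 1/(x\vee\varepsilon)$ Lipschitz with constant $\varepsilon^{-2}$ and bounded by $\varepsilon^{-1}$, so the integrand error is controlled uniformly by $\sup_s|\amsmathbb{I}^\rho_j(s-)-p^{\rho,\texttt{c}}_j(s-)|$ without any risk of a vanishing denominator. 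Splitting the difference into a term where the weight varies and a term where the integrating measure varies, and bounding the latter via integration by parts so that the total-variation norm of $\amsmathbb{N}^\rho_{jk}-p^{\rho,\texttt{c}}_{jk}$ is replaced by the supremum norm, both pieces are dominated by the already-established uniform convergences. This is exactly the argument of~\cite{BladtFurrer2024}, and the diagonal entries $\mathbb{\Lambda}^{\rho,\varepsilon}_{jj}$ inherit consistency since they are finite sums of consistent off-diagonal entries.

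**Second claim and main obstacle.**

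For $\mathbb{p}^{\rho,\varepsilon}_j$ I would invoke the fact that the product integral is a continuous, indeed Hadamard-differentiable, map from matrix-valued functions of bounded variation (in the supremum norm on $[0,\theta]$) to the space of solutions; this is the standard tool linking Nelson--Aalen to Aalen--Johansen. Since $\mathbb{\Lambda}^{\rho,\varepsilon}\to\Lambda^{\rho,\varepsilon}$ uniformly and $p^{\rho,\varepsilon}_j=\big[\Prodi_0^t(\text{Id}+\Lambda^{\rho,\varepsilon}(\mathrm{d}s))\big]_{z_0j}$, the continuous mapping theorem for almost sure convergence gives the claim directly. The main obstacle is a technical one specific to the scaled setting: because $\Lambda^{\rho,\varepsilon}$ is not a genuine hazard matrix (its row sums need not vanish, as emphasized after Proposition~\ref{prop:rep1d}), one must verify that the continuity of the product integral still applies. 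This requires a uniform bound on the total variation of $\mathbb{\Lambda}^{\rho,\varepsilon}$ on $[0,\theta]$, which follows because the weight is bounded by $\varepsilon^{-1}$ and the total variation of $\amsmathbb{N}^\rho_{jk}$ is, with probability one eventually, controlled by that of its limit $p^{\rho,\texttt{c}}_{jk}$; once this equicontinuity-type bound is in hand, the product-integral continuity result applies unchanged and the proof concludes.
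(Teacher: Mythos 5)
Your overall architecture agrees with the paper's: reduce, via the links \eqref{eq:key_1dJ0}--\eqref{eq:key_1dJ1} and \eqref{eq:key_1dJ0emp}--\eqref{eq:key_1dJ1emp} together with \eqref{eq:unif_cons_C}, to strong uniform consistency of the components, and then transfer to $\mathbb{\Lambda}^{\rho,\varepsilon}_{jk}$ and onward to $\mathbb{p}^{\rho,\varepsilon}_j$ exactly as in Section~3 of \cite{BladtFurrer2024}. Your handling of the second stage is sound: the $\varepsilon$-perturbation makes $x\mapsto 1/(x\vee\varepsilon)$ bounded and Lipschitz, the splitting-plus-integration-by-parts step is the standard argument, and your worry about $\Lambda^{\rho,\varepsilon}$ not being a genuine hazard matrix is correctly resolved by the eventual total-variation bound $\mathbb{\Lambda}^{\rho,\varepsilon}_{jk}(\theta)\leq \varepsilon^{-1}\amsmathbb{N}^\rho_{jk}(\theta)$; the paper simply delegates all of this to the proof of Proposition~3.2 in \cite{BladtFurrer2024}.

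There is, however, a genuine gap at the one point where the paper does real work: your claim that uniform consistency of $\amsmathbb{N}^\rho_{jk}$ is \textquotedblleft essentially free\textquotedblright\ because the Glivenko--Cantelli argument behind \eqref{eq:unif_cons_C} \textquotedblleft applies verbatim\textquotedblright\ with $\amsmathbb{E}[N_{jk}(\theta)]<\infty$ supplying an integrable envelope. The cited result (Theorem~2.7.5 in \cite{VaartWellner1996}) concerns classes of \emph{uniformly bounded} monotone functions. The summands of $\amsmathbb{C}^\rho_j$ are bounded by $\sup_{0\leq t\leq\theta}\rho(t,\cdot,\cdot)$, so the argument does apply there; but the summands of $\amsmathbb{N}^\rho_{jk}$, namely $t\mapsto\int_0^t H^\ell(s)\mathds{1}_{\{s\leq R^\ell\}}N^\ell_{jk}(\mathrm{d}s)$, are unbounded, since $N_{jk}(\theta)$ is only assumed integrable (not bounded). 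An integrable envelope alone does not put this class within the scope of the theorem, so \textquotedblleft verbatim\textquotedblright\ fails precisely where care is needed. The paper closes the gap by localization: define $D_m$ as the event that $\amsmathbb{N}_{jk}(\theta)\leq m$ eventually in $n$; on $D_m$ the relevant trajectories are uniformly bounded (using boundedness of $\rho$ on $[0,\theta]$) and monotone, so the Glivenko--Cantelli theorem yields uniform convergence of $\amsmathbb{N}^\rho_{jk}$ there, and the strong law of large numbers applied at the endpoint $\theta$ shows that $D_m$ has probability one for $m$ sufficiently large. Your proof needs this truncation step (or a substitute, e.g.\ a P\'{o}lya-type sandwich using pointwise SLLN on a dense set plus the jump points of the nondecreasing limit $p^{\rho,\texttt{c}}_{jk}$); as written, the step would fail.
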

{
\begin{proof}
See Subsection~\ref{sec:proofs}.
\end{proof}}

\subsection{Asymptotic normality}\label{subsec:as_norm}

To establish asymptotic normality, here meaning weak convergence to a tight Gaussian process, we use more or less explicitly the so-called bracketing central limit theorem, Theorem~2.11.9 in~\cite{VaartWellner1996}; see however also Appendix~D in~\cite{BladtFurrer2024}. In both cases, we first prove weak convergence of the relevant components and then adapt the approach of Section~4 in~\cite{BladtFurrer2024} to obtain weak convergence of the composite quantities. We begin with the one-dimensional representation. 

\begin{lemma}\label{lemma:comp_asn}
Suppose that $\amsmathbb{E}[N_{jk}(\theta)^2] < \infty$ for $j,k\in\mathcal{J}$, $j \neq k$. It then holds that
\begin{align*}
\Big(
\sqrt{n}\big(\amsmathbb{I}^\rho_j(t) - p^{\rho,\texttt{c}}_j(t)\big)_j,
\sqrt{n}\big(\amsmathbb{N}^\rho_{jk}(t) - p^{\rho,\texttt{c}}_{jk}(t)\big)_{j \neq k}
\Big)_{t\in[0,\theta]}
\end{align*}
converges weakly to a centered tight Gaussian process.
\end{lemma}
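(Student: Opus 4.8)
The plan is to regard every empirical process in the display as $\sqrt{n}$ times a centered empirical average, i.e.\ as $\sqrt{n}(\amsmathbb{P}_n - \amsmathbb{P})f$ for $f$ ranging over a suitable class of functions of the generic observation $((Z_t)_{0\leq t\leq R},\eta\wedge R)$, and then to invoke the bracketing central limit theorem (Theorem~2.11.9 in~\cite{VaartWellner1996}), in the spirit of Section~4 of~\cite{BladtFurrer2024}. Only finitely many such classes appear; each has a square-integrable envelope and is monotone in the index $t$, and this is exactly what forces the bracketing entropy integral to converge.

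First I would eliminate the occupation-type processes $\amsmathbb{I}^\rho_j$ in favour of counting-process components. Being pathwise identities, the links~\eqref{eq:key_1dJ0emp}--\eqref{eq:key_1dJ1emp}, together with their population counterparts~\eqref{eq:key_1dJ0}--\eqref{eq:key_1dJ1}, express each centered process $\sqrt{n}\,(\amsmathbb{I}^\rho_j - p^{\rho,\texttt{c}}_j)$ as a fixed, finite linear combination of $\sqrt{n}\,(\amsmathbb{N}^\rho_{kj} - p^{\rho,\texttt{c}}_{kj})$, $\sqrt{n}\,(\amsmathbb{N}_{kj} - p^{\texttt{c}}_{kj})$, and $\sqrt{n}\,(\amsmathbb{C}^\rho_j - C^\rho_j)$, the deterministic terms $\mathds{1}_{\{j=z_0\}}$ cancelling. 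Since such linear combinations are continuous (bounded linear) maps on a finite product of path spaces under the supremum norm, the continuous mapping theorem reduces the claim to joint weak convergence of
\begin{align*}
\Big(
\sqrt{n}\,(\amsmathbb{N}^\rho_{jk} - p^{\rho,\texttt{c}}_{jk})_{j\neq k},\;
\sqrt{n}\,(\amsmathbb{N}_{jk} - p^{\texttt{c}}_{jk})_{j\neq k},\;
\sqrt{n}\,(\amsmathbb{C}^\rho_j - C^\rho_j)_{j}
\Big)_{t\in[0,\theta]}
\end{align*}
to a tight centered Gaussian process; the limit for $\amsmathbb{I}^\rho_j$ is then the corresponding linear image, which is again tight and Gaussian.

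Each entry is an empirical process indexed by one of the classes
\begin{align*}
\mathcal{G}^\rho_{jk}
=
\Big\{
g_t\colon (Z,R)\mapsto \int_0^t H(s)\mathds{1}_{\{s\leq R\}}\,N_{jk}(\mathrm{d}s) \;:\; t\in[0,\theta]
\Big\},
\end{align*}
its unscaled analogue with $H\equiv 1$, and $\{(Z,R)\mapsto H(R)\mathds{1}_{\{Z_R=j\}}\mathds{1}_{\{R\leq t\}} : t\in[0,\theta]\}$. Because each $\rho(\cdot,j,k)$ is uniformly bounded on $[0,\theta]$, every $g_t\in\mathcal{G}^\rho_{jk}$ is dominated by the envelope $\|H\|_\infty\,N_{jk}(\theta)$, which lies in $L_2(\amsmathbb{P})$ precisely by the hypothesis $\amsmathbb{E}[N_{jk}(\theta)^2]<\infty$; the censoring class has the bounded envelope $\|H\|_\infty$. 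Moreover, for each fixed observation $t\mapsto g_t$ is nondecreasing, so natural brackets arise from grids $0=s_0<\cdots<s_K=\theta$ via $[\,g_{s_{i-1}},g_{s_i}\,]$ on $t\in[s_{i-1},s_i]$.

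The crux, and the main obstacle, is bounding the bracketing numbers despite the counting processes being unbounded. I would exploit monotonicity through the pointwise inequality $(g_{s_i}-g_{s_{i-1}})^2\leq \|H\|_\infty N_{jk}(\theta)\,(g_{s_i}-g_{s_{i-1}})$, so that the squared $L_2(\amsmathbb{P})$-width of the $i$-th bracket is at most $\nu_{jk}((s_{i-1},s_i])$, where
\begin{align*}
\nu_{jk}(\mathrm{d}s)
=
\|H\|_\infty\,\amsmathbb{E}\big[N_{jk}(\theta)\,H(s)\mathds{1}_{\{s\leq R\}}\,N_{jk}(\mathrm{d}s)\big],
\qquad
\nu_{jk}([0,\theta])\leq \|H\|_\infty^2\,\amsmathbb{E}[N_{jk}(\theta)^2]<\infty.
\end{align*}
Choosing the grid so that $\nu_{jk}((s_{i-1},s_i])\leq\delta^2$, with the finitely many atoms of $\nu_{jk}$ of mass exceeding $\delta^2$ isolated in degenerate brackets, yields $\log N_{[]}(\delta,\mathcal{G}^\rho_{jk},L_2(\amsmathbb{P}))=O(\log(1/\delta))$ and hence a finite entropy integral $\int_0^\infty\sqrt{\log N_{[]}(\delta,\mathcal{G}^\rho_{jk},L_2(\amsmathbb{P}))}\,\mathrm{d}\delta$; the bounded censoring class is handled identically, indeed more easily. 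Each class is therefore $\amsmathbb{P}$-Donsker, and since a finite union of $\amsmathbb{P}$-Donsker classes is $\amsmathbb{P}$-Donsker, the displayed vector converges jointly to a tight centered Gaussian process, its finite-dimensional distributions being governed by the ordinary multivariate central limit theorem under the square-integrable envelopes. Transferring back through the continuous linear map of the first step yields the asserted joint convergence of $\amsmathbb{I}^\rho_j$ and $\amsmathbb{N}^\rho_{jk}$.
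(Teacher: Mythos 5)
Your proposal is correct and follows essentially the same route as the paper: both reduce the occupation-type processes to the censoring and counting-process components via the pathwise links~\eqref{eq:key_1dJ0}--\eqref{eq:key_1dJ1} and~\eqref{eq:key_1dJ0emp}--\eqref{eq:key_1dJ1emp}, and then establish Donsker properties of the monotone index classes under the square-integrable envelope. Your explicit bracketing computation (the inequality $(g_{s_i}-g_{s_{i-1}})^2\leq \|H\|_\infty N_{jk}(\theta)(g_{s_i}-g_{s_{i-1}})$ and the resulting $O(\delta^{-2})$ bracketing numbers) is just an inlined derivation of Example~2.11.16 in~\cite{VaartWellner1996}, which the paper cites directly, and your finite-union-of-Donsker-classes step plays the role of the paper's appeal to Problem~1.5.3 there.
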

{
\begin{proof}
See Subsection~\ref{sec:proofs}.
\end{proof}}
\begin{theorem}\label{thm:as_nom_1d}
{Let $\varepsilon>0$.} Suppose that $\amsmathbb{E}[N_{jk}(\theta)^2] < \infty$ for $j,k\in\mathcal{J}$, $j \neq k$. It then holds that
\begin{align*}
\Big(
\sqrt{n}\big(\mathbb{p}^{\rho,\varepsilon}_j(t) - p^{\rho,\varepsilon}_j(t)\big)_j,
\sqrt{n}\big(\mathbb{\Lambda}^{\rho,\varepsilon}_{jk}(t) - \Lambda^{\rho,\varepsilon}_{jk}(t)\big)_{j\neq k}
\Big)_{t\in[0,\theta]}
\end{align*}
converges weakly to a centered tight Gaussian process.
\end{theorem}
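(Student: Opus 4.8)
The plan is to obtain the result via the functional delta method, taking Lemma~\ref{lemma:comp_asn} as the input. The two estimators in the statement are built from the empirical components $(\amsmathbb{I}^\rho_j,\amsmathbb{N}^\rho_{jk})$ through two successive maps. The first is the integration-against-perturbed-reciprocal map
\begin{align*}
\Phi \colon (I_j,N_{jk}) \mapsto \Big( t \mapsto \int_0^t \frac{1}{I_j(s-)\vee\varepsilon}\,N_{jk}(\mathrm{d}s)\Big)_{j\neq k},
\end{align*}
which produces the off-diagonal entries of $\mathbb{\Lambda}^{\rho,\varepsilon}$; the diagonal entries are a continuous linear image of these, so the full matrix $\mathbb{\Lambda}^{\rho,\varepsilon}$ is a continuous linear image of $\Phi$. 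The second is the product-integral map $\Psi \colon \Lambda \mapsto \Prodi_0^\cdot(\text{Id}+\Lambda(\mathrm{d}s))$, which produces $\mathbb{p}^{\rho,\varepsilon}$. If $\Phi$ and $\Psi$ are Hadamard differentiable at $(p^{\rho,\texttt{c}}_j,p^{\rho,\texttt{c}}_{jk})$ and at $\Lambda^{\rho,\varepsilon}$, respectively, then the chain rule for Hadamard differentiability combined with the functional delta method, Theorem~3.9.4 in~\cite{VaartWellner1996}, yields joint weak convergence of $\sqrt{n}\big(\mathbb{\Lambda}^{\rho,\varepsilon}-\Lambda^{\rho,\varepsilon},\,\mathbb{p}^{\rho,\varepsilon}-p^{\rho,\varepsilon}\big)$ to a centered tight Gaussian process, which is exactly the assertion. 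Since both components are differentiable images of the \emph{same} underlying limit, the joint, rather than merely marginal, Gaussianity comes for free.

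First I would fix the carrier space as a finite product of $D[0,\theta]$ under the supremum norm and record from Lemma~\ref{lemma:comp_asn} the weak convergence of $\sqrt{n}(\amsmathbb{I}^\rho_j-p^{\rho,\texttt{c}}_j,\amsmathbb{N}^\rho_{jk}-p^{\rho,\texttt{c}}_{jk})$ to a tight centered Gaussian limit. One point to check is that the unscaled components (the case $\rho\equiv 1$) are also needed, since the diagonal of $\mathbb{\Lambda}^{\rho,\varepsilon}$ on $\mathcal{J}_0$ is formed from $\mathbb{\Lambda}^{\varepsilon}_{jk}$; as these are built from the same i.i.d.\ sample, joint weak convergence of the enlarged vector follows from the identical argument, namely the multivariate central limit theorem for the finite-dimensional distributions together with asymptotic tightness of each coordinate.

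The Hadamard differentiability of $\Psi$, with derivative given by the Duhamel equation, is classical and, importantly, does not require the row sums of $\Lambda^{\rho,\varepsilon}$ to vanish; I would invoke the differentiability of product integration on matrix-valued functions of bounded variation, as adapted in Section~4 of~\cite{BladtFurrer2024}. For $\Phi$, the perturbation by $\varepsilon$ is precisely what makes differentiability plausible: since $x\mapsto 1/(x\vee\varepsilon)$ is bounded and Lipschitz, composing it with the bilinear integration functional $(f,\mu)\mapsto\int f\,\mathrm{d}\mu$ suggests the candidate derivative, in direction $(h_j,g_{jk})$,
\begin{align*}
\int_0^\cdot \frac{1}{p^{\rho,\texttt{c}}_j(s-)\vee\varepsilon}\,g_{jk}(\mathrm{d}s)
-
\int_0^\cdot \frac{h_j(s-)\,\mathds{1}_{\{p^{\rho,\texttt{c}}_j(s-)>\varepsilon\}}}{(p^{\rho,\texttt{c}}_j(s-)\vee\varepsilon)^2}\,p^{\rho,\texttt{c}}_{jk}(\mathrm{d}s).
\end{align*}

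The main obstacle is exactly that $x\mapsto 1/(x\vee\varepsilon)$ has a kink at $x=\varepsilon$: there the difference quotient behaves like $-h^{+}/\varepsilon^2$, which is nonlinear in the direction $h$, so $\Phi$ fails to be Hadamard differentiable unless the integrating measure charges no mass to the level set $\{s:p^{\rho,\texttt{c}}_j(s-)=\varepsilon\}$. I would resolve this by the observation that the sets $\{s:p^{\rho,\texttt{c}}_j(s-)=c\}$ are disjoint across $c>0$, while $p^{\rho,\texttt{c}}_{jk}$ is a finite measure; hence at most countably many levels $c$ can receive positive $p^{\rho,\texttt{c}}_{jk}$-mass, and for every $\varepsilon$ outside this countable exceptional set the level set is $p^{\rho,\texttt{c}}_{jk}$-null. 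For such $\varepsilon$ the nonlinear kink never enters the integral, the displayed map is a genuine Hadamard derivative, and assembling $\Phi$, the linear diagonal completion, and $\Psi$ through the delta method delivers the claimed joint Gaussian limit. Since $\varepsilon$ is at the analyst's disposal and ultimately sent to zero, restricting to this generic set is harmless.
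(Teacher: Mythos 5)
Your proposal is correct and follows essentially the same route as the paper's own proof: the functional delta method fed by Lemma~\ref{lemma:comp_asn}, Hadamard differentiability of the perturbed-reciprocal integration map with exactly the paper's candidate derivative (a step the paper delegates to the proof of Theorem~4.5 in \cite{BladtFurrer2024}), and Hadamard differentiability of product integration à la Gill--Johansen, with the paper handling joint convergence across indices via the Cramér--Wold device where you get it automatically from the augmented-map chain rule. Your two additional touches are genuine refinements of points the paper leaves implicit: the kink of $x \mapsto 1/(x\vee\varepsilon)$ at $x=\varepsilon$ really does obstruct Hadamard differentiability when $p^{\rho,\texttt{c}}_{jk}$ charges the level set $\{s : p^{\rho,\texttt{c}}_j(s-)=\varepsilon\}$, and your countability argument for choosing a generic $\varepsilon$ repairs this; likewise, the unscaled processes $\amsmathbb{N}_{jk}$ are indeed needed for the diagonal entries $\mathbb{\Lambda}^{\rho,\varepsilon}_{jj}$, $j\in\mathcal{J}_0$, and must be carried along in the joint convergence, as you correctly note.
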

{
\begin{proof}
See Subsection~\ref{sec:proofs}.
\end{proof}}
We now turn our attention to the two-dimensional case. We follow the strategy outlined in Remark~5.{13} of~\cite{Bathke2024}.
\begin{lemma}\label{lemma:comp_2d_asn}
Suppose that $\amsmathbb{E}[N_{jk}(\theta)^4] < \infty$ for $j,k\in\mathcal{J}$, $j \neq k$. It then holds that
\begin{align*}
\Big(
\sqrt{n}\big(\amsmathbb{I}_{\boldsymbol{j}}(\boldsymbol{t}) - p^{\texttt{c}}_{\boldsymbol{j}}(\boldsymbol{t})\big)_{\boldsymbol{j}},
\sqrt{n}\big(\amsmathbb{N}_{\boldsymbol{j}\boldsymbol{k}}(\boldsymbol{t}) - p^{\texttt{c}}_{\boldsymbol{j}\boldsymbol{k}}(\boldsymbol{t})\big)_{\boldsymbol{j} \neq \boldsymbol{k}}
\Big)_{\boldsymbol{t}\in[0,\theta] \times [0,\theta]}
\end{align*}
converges weakly to a centered tight Gaussian process.
\end{lemma}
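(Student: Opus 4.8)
The plan is to mirror the one-dimensional argument behind Lemma~\ref{lemma:comp_asn}, upgrading every monotone one-parameter class to a two-parameter class built from products of coordinatewise monotone functions. First I would reduce the whole vector to the $\amsmathbb{N}$-type building blocks. By the link~\eqref{eq:def_C_2f}, each $\amsmathbb{I}_{\boldsymbol j}$ is, up to the deterministic constant $\mathds{1}_{\{z_0 = j_1 = j_2\}}$, a fixed finite linear combination of the processes $\amsmathbb{N}_{jk}$, $\amsmathbb{N}^1_{jk}$, $\amsmathbb{N}^2_{jk}$, $\amsmathbb{N}_{\boldsymbol j\boldsymbol k}$, and $\amsmathbb{N}^3_{\boldsymbol j\boldsymbol k}$; the analogous identity~\eqref{eq:def_C_2d} holds for the corresponding means. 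Since a fixed linear map is continuous, weak convergence of the joint vector of these building blocks to a tight Gaussian limit transfers to the joint vector that additionally contains $\sqrt n(\amsmathbb{I}_{\boldsymbol j} - p^{\texttt{c}}_{\boldsymbol j})$, and the limit is centered because each empirical process is centered at its mean. It therefore suffices to establish joint weak convergence of the $\amsmathbb{N}$-type processes.

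For the finite-dimensional distributions I would invoke the multivariate central limit theorem together with the Cramér--Wold device. Each building block is an average of i.i.d.\ terms, so the only thing to check is square-integrability of the summands. The genuinely two-dimensional summands are dominated by $N_{j_1k_1}(\theta) N_{j_2k_2}(\theta)$, and by Cauchy--Schwarz $\amsmathbb{E}[(N_{j_1k_1}(\theta) N_{j_2k_2}(\theta))^2] \le \sqrt{\amsmathbb{E}[N_{j_1k_1}(\theta)^4]\,\amsmathbb{E}[N_{j_2k_2}(\theta)^4]} < \infty$ under the fourth-moment hypothesis; this is precisely where the strengthening from second to fourth moments enters. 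Joint convergence of all finite-dimensional marginals to a centered Gaussian vector follows.

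By Problem~1.5.3 in~\cite{VaartWellner1996} it then remains to prove asymptotic tightness of each component separately, and this is the main obstacle. The component $\amsmathbb{N}_{jk}$ reduces to the one-parameter monotone case treated in Lemma~\ref{lemma:comp_asn} via Example~2.11.16 in~\cite{VaartWellner1996}, while $\amsmathbb{N}^1_{jk}$ and $\amsmathbb{N}^2_{jk}$ are two-parameter but carry only a single counting process times a bounded monotone censoring indicator, so they possess a square-integrable envelope already under the second-moment condition and fall under the product-bracket argument below. The crux is the pair $\amsmathbb{N}_{\boldsymbol j\boldsymbol k}$ and $\amsmathbb{N}^3_{\boldsymbol j\boldsymbol k}$, whose index classes consist of products $x \mapsto N_{j_1k_1}(t_1\wedge R)\,N_{j_2k_2}(t_2\wedge R)$ (times a bounded monotone indicator in the case of $\amsmathbb{N}^3$) of two functions each monotone in its own time argument. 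Here I would verify the bracketing central limit theorem, Theorem~2.11.9 in~\cite{VaartWellner1996}, by forming product brackets: choose $L_4(P)$-brackets $[\ell, u]$ and $[\ell', u']$ with nonnegative endpoints for the one-parameter factor classes $\mathcal{F} = \{N_{j_1k_1}(t_1\wedge R) : t_1\in[0,\theta]\}$ and $\mathcal{G} = \{N_{j_2k_2}(t_2\wedge R) : t_2\in[0,\theta]\}$, and bracket the product by $[\ell\ell', uu']$. Hölder's inequality gives
\begin{align*}
\|uu' - \ell\ell'\|_{P,2}
\le
\|u\|_{P,4}\,\|u' - \ell'\|_{P,4}
+
\|u - \ell\|_{P,4}\,\|\ell'\|_{P,4},
\end{align*}
and since the factor envelopes $N_{j_1k_1}(\theta)$ and $N_{j_2k_2}(\theta)$ lie in $L_4(P)$, the $L_2$-size of each product bracket is controlled by the $L_4$-sizes of the one-parameter brackets. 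Consequently $\log N_{[]}(\,\cdot\,, \mathcal{F}\cdot\mathcal{G}, L_2(P))$ is dominated by the sum of the one-parameter $L_4$-bracketing entropies, and the bracketing integral for the product class is finite because each one-parameter class of coordinatewise monotone functions with $L_4$-envelope satisfies $\log N_{[]}(\epsilon, \cdot, L_4(P)) \lesssim 1/\epsilon$.

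This yields asymptotic equicontinuity, hence tightness, of the two genuinely two-dimensional components, completing the componentwise tightness; the bounded extra indicator in $\amsmathbb{N}^3$ enters as a further factor of envelope one and does not affect the moment bookkeeping. Combining componentwise tightness with the joint convergence of the finite-dimensional distributions gives joint weak convergence of the $\amsmathbb{N}$-type processes, and therefore of the full vector, to a centered tight Gaussian process. The single nonroutine point is the product-bracket bookkeeping: the argument hinges on passing from $L_2$-control at the level of the product to $L_4$-control at the level of each factor, which is exactly what the hypothesis $\amsmathbb{E}[N_{jk}(\theta)^4] < \infty$ supplies.
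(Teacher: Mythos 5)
Your proposal is correct and follows essentially the same route as the paper's proof: reduction to the $\amsmathbb{N}$-type components via the links \eqref{eq:def_C_2d} and \eqref{eq:def_C_2f}, finite-dimensional convergence plus componentwise tightness (Problem~1.5.3 in \cite{VaartWellner1996}), and the bracketing central limit theorem with the fourth-moment hypothesis supplying the square-integrable envelope $N_{j_1k_1}(\theta)N_{j_2k_2}(\theta)$ via Cauchy--Schwarz. The only divergence is in the entropy bookkeeping and is cosmetic: where you build product brackets from $L_4$-brackets of the one-parameter factor classes and pass to $L_2$ via H\"older, the paper partitions $[0,\theta]\times[0,\theta]$ directly into an $n$-independent family of at most $4\,\amsmathbb{E}[N_{j_1k_1}(\theta)^2N_{j_2k_2}(\theta)^2]\mu^{-2}$ rectangles and controls the modulus with the identity $ab-cd=a(b-d)+d(a-c)$ together with monotonicity --- the same algebraic core as your H\"older step, and both verifications yield an integrable bracketing entropy.
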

{
\begin{proof}
See Subsection~\ref{sec:proofs}.
\end{proof}}
\begin{theorem}\label{thm:as_nom_2d}
{Let $\varepsilon>0$.} Suppose that $\amsmathbb{E}[N_{jk}(\theta)^4] < \infty$ for $j,k\in\mathcal{J}$, $j \neq k$. It then holds that
\begin{align*}
\Big(
\sqrt{n}\big(\mathbb{p}^\varepsilon_{\boldsymbol{j}}(\boldsymbol{t}) - p^\varepsilon_{\boldsymbol{j}}(\boldsymbol{t})\big)_{\boldsymbol{j}},
\sqrt{n}\big(\mathbb{\Lambda}^\varepsilon_{\boldsymbol{j}\boldsymbol{k}}(\boldsymbol{t}) - \Lambda^\varepsilon_{\boldsymbol{j}\boldsymbol{k}}(\boldsymbol{t})\big)_{\boldsymbol{j}\neq\boldsymbol{k}}
\Big)_{\boldsymbol{t}\in[0,\theta]\times[0,\theta]}
\end{align*}
converges weakly to a centered tight Gaussian process.
\end{theorem}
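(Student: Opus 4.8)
The plan is to mirror the one-dimensional argument of Theorem~\ref{thm:as_nom_1d}, with the product-integral representation replaced by the bivariate Volterra solution operator. First I would invoke Lemma~\ref{lemma:comp_2d_asn}, which already delivers joint weak convergence of the components $\sqrt{n}\big(\amsmathbb{I}_{\boldsymbol{j}}(\boldsymbol{t}) - p^{\texttt{c}}_{\boldsymbol{j}}(\boldsymbol{t})\big)_{\boldsymbol{j}}$ and $\sqrt{n}\big(\amsmathbb{N}_{\boldsymbol{j}\boldsymbol{k}}(\boldsymbol{t}) - p^{\texttt{c}}_{\boldsymbol{j}\boldsymbol{k}}(\boldsymbol{t})\big)_{\boldsymbol{j}\neq\boldsymbol{k}}$ to a centered tight Gaussian process on $[0,\theta]\times[0,\theta]$; the links~\eqref{eq:def_C_2d} and~\eqref{eq:def_C_2f} are what reduce the full component vector to these pieces, exactly as in the proof of Lemma~\ref{lemma:comp_asn}.

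Next I would establish weak convergence of $\sqrt{n}\big(\mathbb{\Lambda}^\varepsilon_{\boldsymbol{j}\boldsymbol{k}}-\Lambda^\varepsilon_{\boldsymbol{j}\boldsymbol{k}}\big)$ via the functional delta method applied to the bivariate divide-and-integrate map $(\amsmathbb{I}_{\boldsymbol{j}},\amsmathbb{N}_{\boldsymbol{j}\boldsymbol{k}}) \mapsto \int_{\boldsymbol{0}}^{\boldsymbol{t}} \tfrac{1}{\amsmathbb{I}_{\boldsymbol{j}}(\boldsymbol{s-})\vee\varepsilon}\,\amsmathbb{N}_{\boldsymbol{j}\boldsymbol{k}}(\mathrm{d}\boldsymbol{s})$. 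The $\varepsilon$-perturbation keeps the denominator bounded below by $\varepsilon$, so this map is Hadamard differentiable at $(p^{\texttt{c}}_{\boldsymbol{j}},p^{\texttt{c}}_{\boldsymbol{j}\boldsymbol{k}})$ tangentially to the continuous functions, with an explicit derivative that is the two-dimensional analogue of~\eqref{eq:limit_process}; alternatively, following the proof of Theorem~4.5 in~\cite{BladtFurrer2024}, one identifies the limit directly through bivariate integration by parts. The resulting limit is centered tight Gaussian, and the Cram\'er--Wold device together with the joint Gaussianity from Lemma~\ref{lemma:comp_2d_asn} extends this across all $\boldsymbol{j}\neq\boldsymbol{k}$.

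Finally I would pass from $(\Lambda^\varepsilon_{\boldsymbol{j}\boldsymbol{k}})$ to $(p^\varepsilon_{\boldsymbol{j}})$. Since $\mathbb{p}^\varepsilon_{\boldsymbol{j}}$ is defined as the unique solution of the inhomogeneous Volterra system~\eqref{eq:p2d_epsilon} with the empirical quantities substituted in, I would invoke Hadamard differentiability of the solution operator of this system, which is precisely the content adapted from Theorem~4.3 and Section~7 of~\cite{BathkeChristiansen2024} (compare Remark~5.12 in~\cite{Bathke2024}). Because the inhomogeneous terms of~\eqref{eq:p2d_epsilon} involve the one-dimensional marginals $p^\varepsilon_{j_1}$ and $p^\varepsilon_{j_2}$, the input to this operator is the joint weak limit of the two-dimensional $\Lambda$-processes together with the one-dimensional marginal processes supplied by Theorem~\ref{thm:as_nom_1d} (in the non-scaled case $\rho\equiv 1$); these converge jointly because they are continuous images of one and the same underlying empirical processes. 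The functional delta method then yields weak convergence of $\sqrt{n}\big(\mathbb{p}^\varepsilon_{\boldsymbol{j}}-p^\varepsilon_{\boldsymbol{j}}\big)$ to a centered tight Gaussian process, jointly with the $\Lambda$-part, which is the assertion.

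I expect the main obstacle to be the Hadamard differentiability of the bivariate Volterra solution operator, for which there is no classical counterpart playing the role that the Gill--Johansen product-integral theorem~\cite{GillJohansen1990} played in the one-dimensional proof. This step is delicate because it requires controlling the two-dimensional dominated variation of the kernels $\Lambda^\varepsilon_{\boldsymbol{j}\boldsymbol{k}}$ and verifying that the operator maps into a space on which the solution map is differentiable; here the $\varepsilon$-perturbation is essential, as it keeps every denominator bounded away from zero and thereby rules out the blow-up that would otherwise obstruct differentiability. Once this is secured, the remaining bookkeeping---jointness across $\boldsymbol{j}$ and the centered Gaussian character of the limit---follows the one-dimensional template verbatim.
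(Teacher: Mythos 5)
Your proposal follows the same route as the paper's proof: Lemma~\ref{lemma:comp_2d_asn} for the component convergence, the argument of Theorem~4.5 in \cite{BladtFurrer2024} to obtain the $\Lambda$-part with the two-dimensional analogue of~\eqref{eq:limit_process} as limit, the Cram\'er--Wold device to pass across indices, and the functional delta method applied to the solution operator of~\eqref{eq:p2d_epsilon}. The only place you diverge is precisely the step you single out as the ``main obstacle,'' and there your assessment needs correcting on two counts. First, your citation is misplaced: Theorem~4.3 and Section~7 of \cite{BathkeChristiansen2024} provide existence, uniqueness, and representation of the solution to the Volterra system, not Hadamard differentiability of the solution map. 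Second, contrary to your claim that no classical counterpart of the Gill--Johansen product-integral theorem exists, the paper disposes of this step with a citation from the very bivariate-survival literature you invoke elsewhere: the solution of~\eqref{eq:p2d_epsilon} is expressed as a Peano series (Lemma~A.1 in \cite{Bathke2024}), and Proposition~3.4 of \cite{GillVanDerLaanWellner1995} establishes Hadamard differentiability of exactly this functional, with the $\varepsilon$-perturbation playing the role you correctly assign to it. So the step is not open, only mis-sourced; with the Peano-series reference substituted, your argument closes. One small credit: your explicit treatment of the inhomogeneous terms of~\eqref{eq:p2d_epsilon} --- that the one-dimensional marginals $p^\varepsilon_{j}$ enter the input, so joint convergence of the one- and two-dimensional empirical processes must be secured --- is a point the paper passes over in silence, and your justification is sound: the one-dimensional components embed into the two-dimensional ones (for instance $\amsmathbb{I}_j(t) = \amsmathbb{I}_{(j,z_0)}(t,0)$ in the non-scaled case, since $R$ is strictly positive), so everything is a continuous image of the single weakly convergent process of Lemma~\ref{lemma:comp_2d_asn}.
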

{
\begin{proof}
See Subsection~\ref{sec:proofs}.
\end{proof}}

\subsection{Discussion and remarks}

Theorems~\ref{thm:as_nom_1d} and~\ref{thm:as_nom_2d} are of qualitative nature. In principle, it is possible to calculate the covariances and, based hereon, propose plug-in estimators. This was done in~\cite{Glidden2002} for the one-dimensional case without scaling, and the scaled case should follow along similar lines. For the two-dimensional case, the calculations become very involved. In general, we suggest instead the use of the bootstrap. Its validity must be verified using, for instance, the (bootstrap) functional delta method and related results, see also Theorem~5.1 in~\cite{GillVanDerLaanWellner1995} for the validity of the bootstrap for bivariate survival functions. 

The results of Subsections~\ref{subsec:strong_unif_cons} and~\ref{subsec:as_norm} concern occupation probabilities and hazards. In practice, we are interested in expected accumulated cash flows and prospective reserves. If, however, we assume the transition payments $b_{jk}$ to be of finite variation, then the convergences immediately carry over to $\varepsilon$-perturbed versions of the quantities of interest by the continuous mapping theorem and functional delta method, respectively, and these quantities converge to the true quantities of interest as $\varepsilon \to 0$, see also Section~6 in~\cite{Bathke2024}.

{In the later simulation study,} we in particular compare the one-dimensional estimators of~\cite{ChristiansenFurrer2022} with the one-dimensional estimators of this paper. Therefore, we briefly introduce the former. 

The estimators of~\cite{ChristiansenFurrer2022} are based on the results at the end of Subsection~\ref{subsec:1d_reps}. They correspond to the usual Nelson--Aalen and Aalen--Johansen estimators (for occupation probabilities only), but using the data
\begin{align*}
\big((\tilde{Z}^\ell_t)_{0\leq t \leq R^\ell},\tilde{\eta}^\ell \wedge R^\ell\big)_{\ell=1}^n
\end{align*}
with $\tilde{Z}_t^\ell = \mathds{1}_{\{Z_t^\ell \in \mathcal{J}_0\}} Z_t^\ell + \mathds{1}_{\{Z_t^\ell \in \mathcal{J}_1\}} \big(
\mathds{1}_{\{U^\ell \leq \rho(\tau^\ell,Z^\ell_{\tau^\ell-},Z^\ell_{\tau^\ell})\}} Z_t^\ell + \mathds{1}_{\{U^\ell > \rho(\tau^\ell,Z^\ell_{\tau^\ell-},Z^\ell_{\tau^\ell})\}} \nabla\big)$, where $(U_\ell)_{\ell = 1}^n$ is an i.i.d.\ sequence of $\text{Unif}(0,1)$-distributed random variables that is independent of $(Z^\ell,R^\ell)_{\ell = 1}^n$. The resulting estimators are of the same type as the one-dimensional estimators studied in this paper, but without scaling. Their asymptotic theory and numerical implementation is already well understood, but might then also be derived from the results obtained here. However, the usage of the auxiliary variables $(U_\ell)_{\ell = 1}^n$ introduces additional and, as we substantiate in the {comparative simulation study}, unnecessary noise. 
	
{
\subsection{Proofs} \label{sec:proofs}

\begin{proof}[Proof of Theorem~\ref{thm:rho_consistency}]
Following for instance the proof of Proposition~{2} in~\cite{BladtFurrer2024}, it suffices to show strong uniform consistency of the components $\amsmathbb{I}^\rho_j$ and $\amsmathbb{N}^\rho_{jk}$. However, from the links~\eqref{eq:key_1dJ0}-\eqref{eq:key_1dJ1}, the links~\eqref{eq:key_1dJ0emp}-\eqref{eq:key_1dJ1emp}, and the strong uniform consistency~\eqref{eq:unif_cons_C}, it actually even just suffices to establish strong uniform consistency of $\amsmathbb{N}^\rho_{jk}$.

To this end, we use an idea from the proof of Proposition~{1} in~\cite{BladtFurrer2024}. Let $D_m$ be the event where $\amsmathbb{N}_{jk}(\theta) \leq m$ occurs eventually (in $n$). Recall that each $\rho(\cdot,j,k)$ is uniformly bounded on $[0,\theta]$ and note also that $p^{\rho,\texttt{c}}_{jk}$ is non-decreasing. Since the class of all uniformly bounded, monotone functions on the real line is Glivenko--Cantelli, we may conclude that on $D_m$ it holds that
\begin{align*}
&\sup_{0\leq t \leq \theta} \big| \amsmathbb{N}^\rho_{jk}(t) - p^{\rho,\texttt{c}}_{jk}(t) \big| \overset{\text{a.s.}}{\to} 0, &&n \to \infty.
\end{align*}
In addition, by the strong law of large numbers it holds that
\begin{align*}
&\amsmathbb{N}^\rho_{jk}(\theta)
\overset{\text{a.s.}}{\to} p^{\rho,\texttt{c}}_{jk}(\theta), &&n \to \infty,
\end{align*}
which shows that for $m$ sufficiently large $D_m$ has probability one. Collecting results completes the proof.
\end{proof}
	
\begin{proof}[Proof of Lemma~\ref{lemma:comp_asn}]
It suffices to show weak convergence of
\begin{align*}
\Big(
\sqrt{n}\big(\amsmathbb{C}^\rho_j(t) - C^\rho_j(t)\big)_j,
\sqrt{n}\big(\amsmathbb{N}^\rho_{jk}(t) - p^{\rho,\texttt{c}}_{jk}(t)\big)_{j \neq k}
\Big)_{t\in[0,\theta]}
\end{align*}
to a tight Gaussian process. This is because
\begin{align*}
\Big(
\sqrt{n}\big(\amsmathbb{I}^\rho_j(t) - p^{\rho,\texttt{c}}_j(t)\big)_j,
\sqrt{n}\big(\amsmathbb{N}^\rho_{jk}(t) - p^{\rho,\texttt{c}}_{jk}(t)\big)_{j \neq k}
\Big)_{t\in[0,\theta]}
\end{align*}
is a linear transformation hereof, confer with~\eqref{eq:key_1dJ0}-\eqref{eq:key_1dJ1} and~\eqref{eq:key_1dJ0emp}-\eqref{eq:key_1dJ1emp}, so it retains the weak convergence to a tight Gaussian process. (The limiting distribution is of course centered.) Convergence of the finite-dimensional distributions follows from the multivariate central limit theorem. According to Problem~1.5.3 in~\cite{VaartWellner1996}, it thus suffices to show asymptotic tightness of each component. Note that this is the case for $\sqrt{n}\big(\amsmathbb{C}^\rho_j(t) - C^\rho_j(t)\big)_{t\in[0,\theta]}$, given that the class of all uniformly bounded, monotone functions on the real line is Donsker, confer with Subsection~2.7.2 in~\cite{VaartWellner1996}. Further, under the assumptions $\amsmathbb{E}[N_{jk}(\theta)^2] {< \infty}$ this is also the case for $\sqrt{n}\big(\amsmathbb{N}^\rho_{jk}(t) - p^{\rho,\texttt{c}}_j(t)\big)_{t\in[0,\theta]}$, confer with Example~2.11.16 in~\cite{VaartWellner1996}. Collecting results completes the proof.
\end{proof}

\begin{proof}[Proof of Theorem~\ref{thm:as_nom_1d}]
Since the functional $(\Lambda^{\rho,\varepsilon}_{jk})_{j\neq k} \mapsto (p^{\rho,\varepsilon}_j)_j$ is Hadamard differentiable, confer with Theorem~8 in~\cite{GillJohansen1990}, it suffices by the functional delta method to establish weak convergence of the former. 

Let $t \mapsto X_j^{\rho,\texttt{c}}(t)$ and $t \mapsto X_{jk}^{\rho,\texttt{c}}(t)$ be the limit processes of $t \mapsto \sqrt{n}\big(\amsmathbb{I}^\rho_j(t) - p^{\rho,\texttt{c}}_j(t)\big)$ and $t \mapsto \sqrt{n}\big(\amsmathbb{N}^\rho_{jk}(t) - p^{\rho,\texttt{c}}_{jk}(t)\big)$, respectively. According to Lemma~\ref{lemma:comp_asn}, these limit processes are centered tight joint Gaussian. Similar to the proof of Theorem~{2} in~\cite{BladtFurrer2024}, we may show that $t \mapsto \sqrt{n}\big(\mathbb{\Lambda}^{\rho,\varepsilon}_{jk}(t) - \Lambda^{\rho,\varepsilon}_{jk}(t)\big)$ converges to the limit process
\begin{align}\label{eq:limit_process}
\begin{split}
&t \mapsto
\int_0^t  \frac{1}{p_j^{\rho,\texttt{c}}(s-)\vee \varepsilon} X_{jk}^{\rho,\texttt{c}}(\mathrm{d}s)
-
\int_0^t \frac{\mathds{1}_{\{p_j^{\rho,\texttt{c}}(s-)>\varepsilon\}}X_j^{\rho,\texttt{c}}(s-)}{p_j^{\rho,\texttt{c}}(s-)} \Lambda_{jk}^{\rho,\varepsilon}(\mathrm{d}s) \\
&=
\frac{X_{jk}^{\rho,\texttt{c}}(t)}{p_j^{\rho,\texttt{c}}(t)\vee \varepsilon}
-
\int_0^t X_{jk}^{\rho,\texttt{c}}(s) \, \mathrm{d}\bigg(\frac{1}{p_j^{\rho,\texttt{c}}(s)\vee \varepsilon}\bigg)
-
\int_0^t \frac{\mathds{1}_{\{p_j^{\rho,\texttt{c}}(s-)>\varepsilon\}}X_j^{\rho,\texttt{c}}(s-)}{p_j^{\rho,\texttt{c}}(s-)} \Lambda_{jk}^{\rho,\varepsilon}(\mathrm{d}s).
\end{split}
\end{align}
This limit process is again a centered tight Gaussian process. (In principle, one needs to verify that the finite-dimensional distributions are Gaussian, but this follows directly from the above decomposition.) To extend the result to $(\Lambda^{\rho,\varepsilon}_{jk})_{j\neq k}$, it suffices to verify convergence of the finite-dimensional distributions, confer with Problem~1.5.3 in~\cite{VaartWellner1996}. However, this follows immediately by the Cramer--Wold device in combination with~\eqref{eq:limit_process} and the joint Gaussianity from Lemma~\ref{lemma:comp_asn}.
\end{proof}

\begin{proof}[Proof of Lemma~\ref{lemma:comp_2d_asn}]
Similar to the one-dimensional case, confer with the proof of Lemma~\ref{lemma:comp_asn}, the link~\eqref{eq:def_C_2d} and the link~\eqref{eq:def_C_2f} imply that it suffices to establish asymptotic tightness of the components involved. In the following, we argue that the empirical process
\begin{align*}
[0,\theta] \times [0,\theta] \ni \boldsymbol{t} \mapsto \sqrt{n}\big(\amsmathbb{N}_{\boldsymbol{j}\boldsymbol{k}}(\boldsymbol{t}) - p^{\texttt{c}}_{\boldsymbol{j}\boldsymbol{k}}(\boldsymbol{t})\big)
\end{align*}
is asymptotically tight. For the other components, the argument is either simpler or quite similar and therefore, for the sake of brevity, omitted. We shall employ the bracketing central limit theorem with the envelope function
\begin{align*}
F
=
N_{j_1k_1}(\theta)N_{j_2k_2}(\theta),
\end{align*}
which is square integrable by assumption. To this end, let the $\mu$-bracketing number be denoted by $N_{[\,]}(\mu,\mathcal{P},L_2)$. This is the minimum number of sets $N_\mu$ in an $n$-independent partition $\mathcal{P} = \bigcup_{m=1}^{N_\mu} \mathcal{P}_{\mu,m}$ of the index set $[0,\theta] \times [0,\theta]$ such that, for every partitioning set $\mathcal{P}_{\mu,m}$,
\begin{align*}
\amsmathbb{E}
\Big[\sup_{\boldsymbol{t},\boldsymbol{s} \in \mathcal{P}_{\mu,m}} 
\big\lvert 
N_{j_1k_1}(t_1 \wedge R) N_{j_2k_2}(t_2 \wedge R) - N_{j_1k_1}(s_1 \wedge R)N_{j_2k_2}(s_2 \wedge R)
\big\rvert^2
\Big]
\leq \mu^2.
\end{align*}
Since the envelope function is square integrable by assumption, it suffices to identify an $n$-independent partition that confirms the entropy condition
\begin{align*}
\int_0^{\delta_n}
\sqrt{\log{N_{[\,]}(\mu,\mathcal{P},L_2)}} \, \mathrm{d}\mu \to 0, \quad \text{for every } \delta_n \downarrow 0.
\end{align*}
Note that if $\mathcal{P}_{\mu,m}$ is a rectangle $[\underline{t}_m, \overline{t}_m] \times [\underline{s}_m, \overline{s}_m]$, then by applying the equality $ab - cd = a(b-d) + d(a-c)$, we may bound
\begin{align*}
\sup_{\boldsymbol{t},\boldsymbol{s} \in \mathcal{P}_{\mu,m}} 
\big\lvert 
N_{j_1k_1}(t_1 \wedge R) N_{j_2k_2}(t_2 \wedge R) - N_{j_1k_1}(s_1 \wedge R)N_{j_2k_2}(s_2 \wedge R)
\big\rvert^2
\end{align*}
by twice the sum of
\begin{align*}
&\amsmathbb{E}\big[N_{j_1k_1}(\theta)^2 N_{j_2k_2}(\theta)\big(N_{j_2k_2}(\overline{s}_m \wedge R) - N_{j_2k_2}(\underline{s}_m\wedge R)\big)\big] \\
&+
\amsmathbb{E}\big[N_{j_1k_1}(\theta) N_{j_2k_2}(\theta)^2\big(N_{j_1k_1}(\overline{t}_m \wedge R) - N_{j_1k_1}(\underline{t}_m\wedge R)\big)\big]
\end{align*}
due to the monotonicity of the processes involved. For each of these terms, the argument of Example~2.11.16 in~\cite{VaartWellner1996} applies. In particular, there exists an $n$-independent partition $\mathcal{P}$ of at most $4 \amsmathbb{E}[N_{j_1k_1}(\theta)^2N_{j_2k_2}(\theta)^2] \mu^{-2}$ rectangles such that, for every rectangle $\mathcal{P}_{\mu,m}$,
\begin{align*}
\amsmathbb{E}
\Big[\sup_{\boldsymbol{t},\boldsymbol{s} \in \mathcal{P}_{\mu,m}} 
\big\lvert 
N_{j_1k_1}(t_1 \wedge R) N_{j_2k_2}(t_2 \wedge R) - N_{j_1k_1}(s_1 \wedge R)N_{j_2k_2}(s_2 \wedge R)
\big\rvert^2
\Big]
\leq \mu^2.
\end{align*}
Consequently, the entropy condition is satisfied. To see this, consider $\delta_n \downarrow 0$. For $n$ sufficiently large and an appropriate constant $K>0$,
\begin{align*}
\int_0^{\delta_n}
\sqrt{\log{N_{[\,]}(\mu,\mathcal{P},L_2)}} \, \mathrm{d}\mu
&\leq
\int_0^{\delta_n}
\sqrt{\log\!\big\{4 \amsmathbb{E}[N_{j_1k_1}(\theta)^2N_{j_2k_2}(\theta)^2] \mu^{-2}\big\}} \, \mathrm{d}\mu \\
&\leq
K \int_0^{\delta_n} \sqrt{\mu^{-1}} \, \mathrm{d}\mu \\
&=
2K \sqrt{\delta_n} \to 0.
\end{align*}
This completes the proof.
\end{proof}

\begin{proof}[Proof of Theorem~\ref{thm:as_nom_2d}]
The solution to the integral equations~\eqref{eq:p2d_epsilon} may be expressed with the help of a Peano series, see Lemma~A.1 in~\cite{Bathke2024}, which is a Hadamard differentiable functional, see Proposition~3.4 in~\cite{GillVanDerLaanWellner1995}. Therefore, it suffices by the functional delta method to establish weak convergence of
\begin{align*}
\Big(
\sqrt{n}\big(\mathbb{\Lambda}^\varepsilon_{\boldsymbol{j}\boldsymbol{k}}(\boldsymbol{t}) - \Lambda^\varepsilon_{\boldsymbol{j}\boldsymbol{k}}(\boldsymbol{t})\big)_{\boldsymbol{j}\neq\boldsymbol{k}}
\Big)_{\boldsymbol{t}\in[0,\theta]\times[0,\theta]}.
\end{align*}
Let $\boldsymbol{t} \mapsto X_{\boldsymbol{j}}^{\texttt{c}}(\boldsymbol{t})$ and $\boldsymbol{t} \mapsto X_{\boldsymbol{j}\boldsymbol{k}}^{\texttt{c}}(\boldsymbol{t})$ be the limit processes of $\boldsymbol{t} \mapsto \sqrt{n}\big(\amsmathbb{I}_{\boldsymbol{j}}(\boldsymbol{t}) - p^{\texttt{c}}_{\boldsymbol{j}}(\boldsymbol{t})\big)$ and $\boldsymbol{t} \mapsto \sqrt{n}\big(\amsmathbb{N}_{\boldsymbol{j}\boldsymbol{k}}(t) - p^{\texttt{c}}_{\boldsymbol{j}\boldsymbol{k}}(\boldsymbol{t})\big)$, respectively. Now, similar to the one-dimensional case, confer with the proof of Theorem~\ref{thm:as_nom_1d}, one may show that $\boldsymbol{t} \mapsto \sqrt{n}\big(\mathbb{\Lambda}^\varepsilon_{\boldsymbol{j}\boldsymbol{k}}(\boldsymbol{t}) - \Lambda^\varepsilon_{\boldsymbol{j}\boldsymbol{k}}(\boldsymbol{t})\big)$ converges to the limit process
\begin{align*}
\boldsymbol{t} \mapsto
\int_{\boldsymbol{0}}^{\boldsymbol{t}} \frac{1}{p_j^{\rho,\texttt{c}}(\boldsymbol{s}-)\vee \varepsilon} X_{\boldsymbol{j}\boldsymbol{k}}^{\texttt{c}}(\mathrm{d}\boldsymbol{s})
-
\int_{\boldsymbol{0}}^{\boldsymbol{t}} \frac{\mathds{1}_{\{p_{\boldsymbol{j}}^{\texttt{c}}(\boldsymbol{s}-)>\varepsilon\}}X_{\boldsymbol{j}}^{\texttt{c}}(\boldsymbol{s}-)}{p_{\boldsymbol{j}}^{\texttt{c}}(\boldsymbol{s}-)} \Lambda_{\boldsymbol{j}\boldsymbol{k}}^{\varepsilon}(\mathrm{d}\boldsymbol{s}),
\end{align*}
which is again a centered tight Gaussian process. (In principle, one needs to verify that the finite-dimensional distributions are Gaussian, but this follows by standard arguments applied to the above decomposition in combination with two-dimensional integration by parts.) The extension from fixed indices $\boldsymbol{j}$ and $\boldsymbol{k}$ to across indices transpires exactly as in the one-dimensional case.
\end{proof}

}

\section{Comparative simulation study}\label{sec:sim}

The setup of the simulation study is adopted from~\cite{Bathke2024}, the emphasize being on simplicity rather than authenticity. We consider a multi-state model with six states as in Figure~\ref{fig:state_space}, which allows for the modeling of the free policy as well as the surrender option. In this specific example, the jump process can at most be semi-Markov.

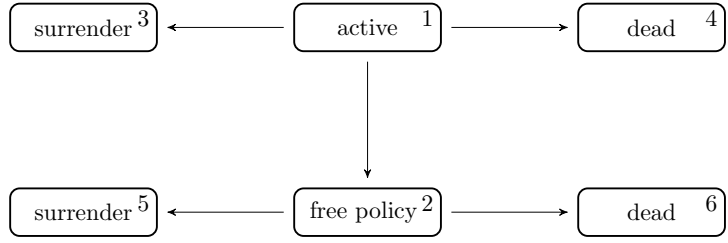
\begin{figure}[h]
	\centering
	\scalebox{0.9}{
	\begin{tikzpicture}[node distance=2em and 0em]
		\node[punkt] (2) {dead};
		\node[anchor=north east, at=(2.north east)]{$4$};
		\node[punkt, left = 20mm of 2] (0) {active};
		\node[anchor=north east, at=(0.north east)]{$1$};
		\node[punkt, left = 20mm of 0] (1) {surrender};
		\node[anchor=north east, at=(1.north east)]{$3$};
		\node[punkt, below = 20mm of 0] (3) {free policy\,\,};
		\node[anchor=north east, at=(3.north east)]{$2$};
		\node[punkt, left = 20mm of 3] (4) {surrender};
		\node[anchor=north east, at=(4.north east)]{$5$};
		\node[punkt, right = 20mm of 3] (5) {dead};
		\node[anchor=north east, at=(5.north east)]{$6$};		
	\path
		(0)	edge [pil]			node [above]			{}				(1)
		(0)	edge [pil]			node [above]			{}				(2)
		(0)	edge [pil]			node [above]			{}				(3)
		(3)	edge [pil]			node [above]			{}				(4)
		(3)	edge [pil]			node [above]			{}				(5)		
	;
	\end{tikzpicture}}
	\caption{Survival model with free policy and surrender options.}
	\label{fig:state_space}
\end{figure}

The insured has age $40$ years at contract inception. The contractual payments consist of an initial premium of $100,000$ monetary units, an annual premium rate while alive of $10,000$ monetary units until retirement at age $65$ years, and an annual benefit rate while alive from onset of retirement. Policyholder options are typically not explicitly included on the technical basis; this is achieved by determining the surrender payments and the free policy factor $t \mapsto \rho(t)$ such that there is no change in technical reserve upon exercise of the option. In particular, the technical reserve may be calculated in a survival model, and only a technical mortality rate and technical interest rate needs to be specified. {In practice, these rates are set prudently as part of the contract design and therefore not directly subject to statistical estimation{; in this simulation study, their only role is to determine the scaling factor}. We suppose that the technical mortality rate is
\begin{align*}
	\mu^\star(t) &= 0.005+10^{(5.728-10+0.038 t)},
\end{align*}
{which corresponds to the Danish G82 mortality table for females,} and that the technical interest rate is zero. Finally, we assume that the annual benefit rate has been determined to maintain actuarial equivalence under this technical basis. All in all, we then get
\begin{align*}
&B_1 (\mathrm{d}t) = \big(-10,000 \mathds{1}_{[0,25)}(t) + 22,658.67 \mathds{1}_{[25,\infty)}(t) \big) \, \mathrm{d}t, \\ 
&B_2 (\mathrm{d}t) = 22,658.67 \mathds{1}_{[25,\infty)}(t) \, \mathrm{d}t, \\
&b_0 = 100,000, \quad b_{13}(t) = V^{\star}(t), \quad b_{25}(t) = V^{\star,+}(t), \quad \rho(t) = \frac{V^\star(t)}{V^{\star,+}(t)},
\end{align*}
where $V^\star$ is the technical reserve, and $V^{\star,+}$ is the technical reserve of benefits solely. The resulting scaled payment stream reads
\begin{align*}
&B(\mathrm{d}t) \\
&= \rho(t)^{\mathds{1}_{\{\tau \leq t\}}} \Big(\mathds{1}_{\{Z_{t-} =1 \}} B_1(\mathrm{d}t) + \mathds{1}_{\{Z_{t-} = 2\}} B_2(\mathrm{d}t) + b_{13}(t) N_{13}(\mathrm{d}t) + b_{25}(t) N_{25}(\mathrm{d}t)\Big).
\end{align*}
The one-dimensional representation of the corresponding expected accumulated cash flow reads
\begin{align}\label{eq:1d_rep_sim_study}
\begin{split}
A(t)
=
b_0
+
\int_0^t \Big(&p_1(s-) B_1(\mathrm{d}s) + p_1(s-) b_{13}(s) \Lambda_{13}(\mathrm{d}s) \\
&+p_2^\rho(s-) B_2(\mathrm{d}s) + p_2^\rho(s-)b_{13}(s) \Lambda_{25}^\rho(\mathrm{d}s)\Big),
\end{split}
\end{align}
while the two-dimensional representation becomes
\begin{align}\label{eq:2d_rep_sim_study}
\begin{split}
A(t)
&=
\int_0^t \int_{\boldsymbol{0}}^{(t,s-)} \hspace{-3mm}\rho(u_1) \Big(p_{(1,1)}(\boldsymbol{u}-)\Lambda_{(1,1)(2,2)}(\mathrm{d}\boldsymbol{u}) \\
&\qquad\qquad\qquad\qquad\hspace{-2mm} - p_{(1,2)}(\boldsymbol{u}-)\big(\Lambda_{(1,2)(2,5)}(\mathrm{d}\boldsymbol{u}) + \Lambda_{(1,2)(2,6)}(\mathrm{d}\boldsymbol{u})\big)\Big) B_2(\mathrm{d}s) \\
&\quad+
b_0
+
\int_0^t p_1(s-) \Big(B_1(\mathrm{d}s) + b_{13}(s) \Lambda_{13}(\mathrm{d}s)\Big) \\
&\quad+
\int_{\boldsymbol{0}}^{(t,t)} \rho(u_1) p_{(1,2)}(\boldsymbol{u}-)b_{25}(u_2) \Lambda_{(1,2)(2,5)}(\mathrm{d}\boldsymbol{u}).
\end{split}
\end{align}
Data is simulated from a semi-Markov model with initial state $z_0 = 1$ and the following (duration-dependent) transition rates:
\begin{align*}
	&\mu_{14}(t) = \mu_{26}(t) = \mu^\star(t), \\
	&\mu_{12}(t) = 0.1\mathds{1}_{[0,25)}(t), \\
	&\mu_{13}(t) = 0.05\mathds{1}_{[0,25)}(t), \\
	&\mu_{25}(t,u) = \big(0.05+0.2\mathds{1}_{[\frac{1}{2},\frac{5}{2})}(u) \big) \mathds{1}_{[0,25)}(t).
\end{align*}
{This means, for instance, that
\begin{align*}
C_{25}(t) = \int_0^t \mathds{1}_{\{Z_{s-} = 2\}} \mu_{25}(s,U_{s-}) \, \mathrm{d}s,
\end{align*}
where $C_{25}$ is the compensator of $N_{25}$.} The right-censoring times are, solely for illustrative purposes, drawn from the distribution $\text{Unif}(20,80)$.

\subsection{Estimates} 

Recall that the perturbation given by $\varepsilon$ only plays a role in the theoretical analyses of the estimators. Correspondingly, throughout the simulation study, we set $\varepsilon$ equal to zero and suppress it notationally. True values from the underlying semi-Markov model are calculated using Monte Carlo methods based on an uncensored sample of size $n = 10,000$.

First, we compare our scaled Aalen--Johansen estimator of the scaled occupation probabilities in state $2$ with the proposed estimator of~\cite{ChristiansenFurrer2022}, which relies on change of measure arguments; we use the acronyms SAJ and CMAJ, respectively. The estimators are computationally similar, the complexity being $\mathcal{O}(n)$ for both. In Figure~\ref{fig:plot_probabilities}, we have plotted the resulting estimates for $n=2,000$ and, in the case of the CMAJ estimator, for three different realizations of its auxiliary uniformly distributed random variables.

\begin{figure}[h]
	\centering
	\includegraphics[width=0.9\textwidth]{"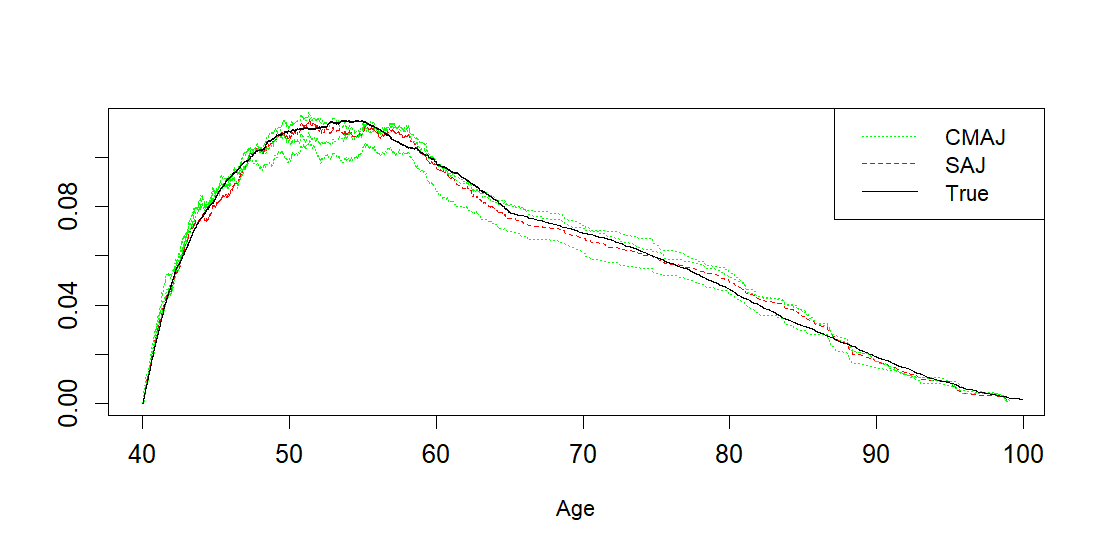"} 	
	\caption{Estimates of the scaled occupation probabilities $p_2^\rho$ (black) for $n=2,000$ using the SAJ (red) and the CMAJ (green) estimators; the latter is illustrated with three different realizations of its auxiliary uniformly distributed random variables.}
	\label{fig:plot_probabilities}
\end{figure}

One immediate observations is that these uniformly distributed random variables induce additional noise to the estimates; apart from that, the estimators exhibit very similar characteristics. This is further illustrated in Figure~\ref{fig:plot_histo}, where we consider $100$ realizations of the auxiliary random variables, but take a snapshot at $t=20$. The immediate and unsurprising conclusion is that the SAJ estimator strictly improves on the CMAJ estimator, since it eliminates the artificial noise introduced by the auxiliary random variables.

\begin{figure}[h]
	\centering
	\includegraphics[width=0.9\textwidth]{"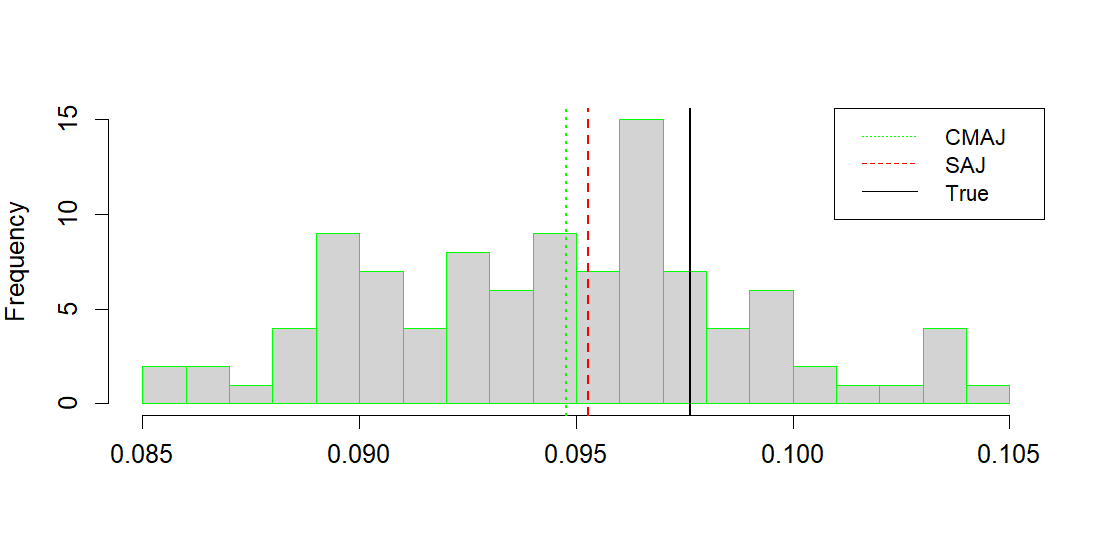"} 	
	\caption{Histogram of estimates of the scaled occupation probability $p_2^\rho(20)$ (black) for $n=2,000$ and $100$ realizations of the auxiliary uniformly distributed random variables together with the resulting average estimate (green) and the SAJ estimate (red).}
	\label{fig:plot_histo}
\end{figure}

We now turn to the comparison of the SAJ estimator with the two-dimensional Aalen--Johansen (2dAJ) estimator. We focus on the expected accumulated cash flow $t \mapsto A(t)$ to avoid comparing apples with pears. From~\eqref{eq:1d_rep_sim_study}, we obtain the plug-in SAJ estimator
\begin{align*}
t \mapsto
b_0
+
\int_0^t \Big(&\mathbb{p}_1(s-) B_1(\mathrm{d}s) + \mathbb{p}_1(s-) b_{13}(s) \mathbb{\Lambda}_{13}(\mathrm{d}s) \\
&+\mathbb{p}_2^\rho(s-) B_2(\mathrm{d}s) + \mathbb{p}_2^\rho(s-)b_{13}(s) \mathbb{\Lambda}_{25}^\rho(\mathrm{d}s)\Big),
\end{align*}
while~\eqref{eq:2d_rep_sim_study} yields the plug-in 2dAJ estimator
\begin{align*}
t &\mapsto
b_0
+
\int_0^t \mathbb{p}_1(s-) \Big(B_1(\mathrm{d}s) + b_{13}(s) \mathbb{\Lambda}_{13}(\mathrm{d}s)\Big) \\
&\quad+
\int_0^t \int_{\boldsymbol{0}}^{(t,s-)} \rho(u_1) \Big(\mathbb{p}_{(1,1)}(\boldsymbol{u}-)\mathbb{\Lambda}_{(1,1)(2,2)}(\mathrm{d}\boldsymbol{u}) \\
&\qquad\qquad\qquad\qquad\qquad - \mathbb{p}_{(1,2)}(\boldsymbol{u}-)\big(\mathbb{\Lambda}_{(1,2)(2,5)}(\mathrm{d}\boldsymbol{u}) + \mathbb{\Lambda}_{(1,2)(2,6)}(\mathrm{d}\boldsymbol{u})\big)\Big) B_2(\mathrm{d}s) \\
&\quad+
\int_{\boldsymbol{0}}^{(t,t)} \rho(u_1) \mathbb{p}_{(1,2)}(\boldsymbol{u}-)b_{25}(u_2) \mathbb{\Lambda}_{(1,2)(2,5)}(\mathrm{d}\boldsymbol{u}).
\end{align*}
The computation of these estimators requires numerical integration, which reduces to a one- and two-dimensional recursion, respectively. In particular, the 2dAJ estimator is computationally more demanding, the complexity being $\mathcal{O}(n^2)$ rather than $\mathcal{O}(n)$. Comparing the two estimators, one notices another fundamental difference: the latter separates $\rho$ from the estimation procedure. If the contractual payments differ between insured, then so do potentially the scaling factors $\rho$, which would necessitate up to $n$ SAJ estimates, increasing the complexity of the SAJ estimator to $\mathcal{O}(n^2)$.

In Figures~\ref{fig:plot_cf_2000} and~\ref{fig:plot_cf_5000}, we have plotted the resulting estimates for $n=2,000$ and $n=5,000$, respectively. In the body, both estimators perform well and essentially identically. However, in the tail, the SAJ estimator outperforms the 2dAJ estimator. This deficiency of the 2dAJ estimator is in line with earlier results for the bivariate survival case reported in, for instance, the last paragraph of Subsection~4.3 in~\cite{GillJohansen1990}. In light hereof, we now give some further theoretical comments following and extending Section~7 in~\cite{Bathke2024}.

\begin{figure}[h]
	\centering
	\includegraphics[width=0.9\textwidth]{"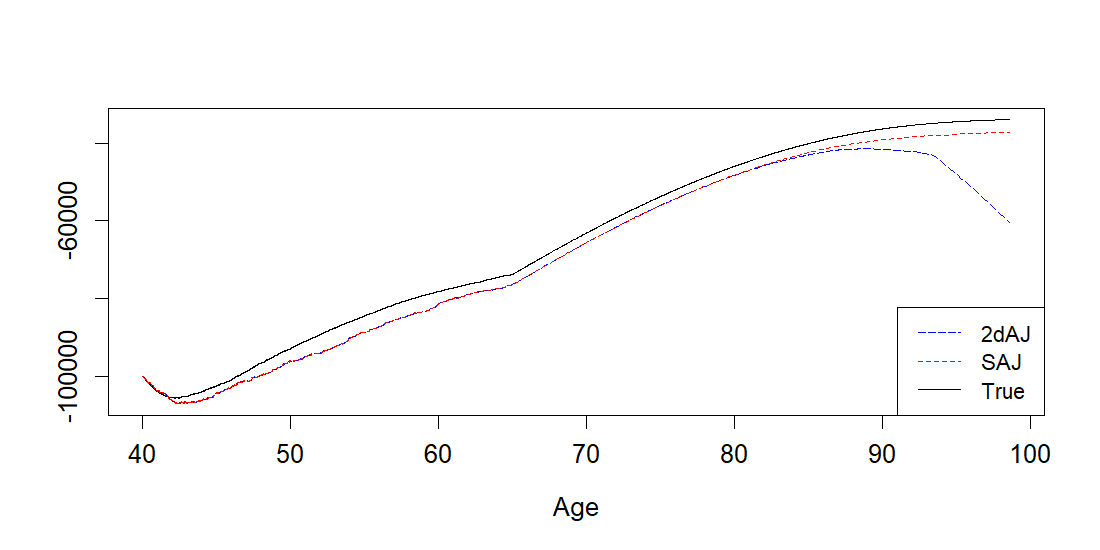"} 	
	\caption{Estimates of the expected accumulates cash flow $t \mapsto A(t)$ (black) for $n=2,000$ using the SAJ (red) and the 2dAJ (blue) estimators.}
	\label{fig:plot_cf_2000}
\end{figure}

\begin{figure}[h]
	\centering
	\includegraphics[width=0.9\textwidth]{"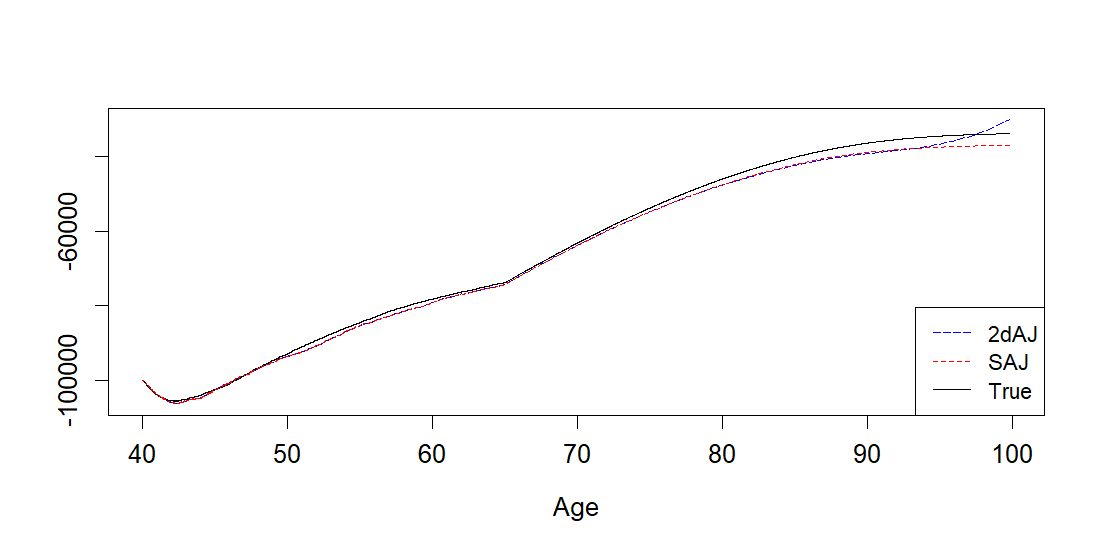"} 	
	\caption{Estimates of the expected accumulates cash flow $t \mapsto A(t)$ (black) for $n=5,000$ using the SAJ (red) and the 2dAJ (blue) estimators.}
	\label{fig:plot_cf_5000}
\end{figure}
Now, in the absence of censoring, the estimators agree and equal the non-parametric maximum likelihood estimator. This follows from the fact that $\mathbb{I}^\rho = \mathbb{p}^\rho$ and $\mathbb{I}_{\boldsymbol{j}} = \mathbb{p}_{\boldsymbol{j}}$ when no censoring is present. Under right-censoring, the 2dAJ estimator may fail to satisfy monotonicity and positivity; this was already noted in~\cite{Dabrowska1988} for the bivariate survival case. It also introduces additional sub-sampling, which is best illustrated via a simple example. Consider three individuals with the following event times and types:
\begin{align*} 
&\text{Individual I:} &&\text{times} = (\tau^1,\sigma^1), &&\text{types} = (1 \to 2, 2 \to 4), \\
&\text{Individual II:} &&\text{times} = (\tau^2,R^2), &&\text{types} = (1 \to 2, \text{right-censoring}), \\
&\text{Individual III:} &&\text{times} = (\tau^3,\sigma^3), &&\text{types} = (1 \to 2, 2 \to 4)
\end{align*}
with $\tau^3 < \tau^1 < \tau^2 < \sigma^1 < R^2 < \sigma^3$. The plug-in SAJ estimate of $A(\sigma^3) - A(R^2)$ becomes $\mathbb{p}_2^{\rho}(R^2)(B_2(\sigma^3)-B_2(R^2))$, where $\mathbb{p}_2^{\rho}(R^2)$ is given by
\begin{align*}
\mathbb{p}_2^{\rho}(R^2)
=
\rho(\tau^3)\frac{\mathbb{p}_1^{\rho}(0)}{\amsmathbb{I}_1^{\rho}(0)}\frac{1}{3}+\rho(\tau^1)\frac{\mathbb{p}_1^{\rho}(\tau^3)}{\amsmathbb{I}_1^{\rho}(\tau^3)}\frac{1}{3}+\rho(\tau^2)\frac{\mathbb{p}_1^{\rho}(\tau^1)}{\amsmathbb{I}_1^{\rho}(\tau^1)}\frac{1}{3}
	-\rho(\tau^1)\frac{\mathbb{p}_2^{\rho}(\sigma^1)}{\amsmathbb{I}_2^{\rho}(\sigma^1)}\frac{1}{3},
\end{align*}
while the plug-in 2dAJ estimate reads 
\begin{align*}
\frac{1}{3}
\bigg(&\rho(\tau^3)\frac{\mathbb{p}_{(1,1)}(0,0)}{\amsmathbb{I}_{(1,1)}(0,0)}+\rho(\tau^1)\frac{\mathbb{p}_{(1,1)}(\tau^3,\tau^3)}{\amsmathbb{I}_{(1,1)}(\tau^3,\tau^3)}\\
&+\rho(\tau^2)\frac{\mathbb{p}_{11}(\tau^1,\tau^1)}{\amsmathbb{I}_{(1,1)}(\tau^1,\tau^1)}-\rho(\tau^1)\frac{\mathbb{p}_{(1,2)}(\tau^1,\sigma^1)}{\amsmathbb{I}_{(1,2)}(\tau^1,\sigma^1)}\bigg)\Big(B_2(\sigma^3) - B_2(R^2)\Big).
\end{align*}
For these three individuals, right-censoring only takes place from state $2$, so the first three summands across estimates agree. The only difference lies in the last summands, that is between
\begin{align*}
\rho(\tau^1)\frac{\mathbb{p}_2^{\rho}(\sigma^1)}{\amsmathbb{I}_2^{\rho}(\sigma^1)} \quad \text{and} \quad \rho(\tau^1)\frac{\mathbb{p}_{(1,2)}(\tau^1,\sigma^1)}{\amsmathbb{I}_{(1,2)}(\tau^1,\sigma^1)},
\end{align*}
where the 2dAJ estimator performs additional sub-sampling.

\subsection{Conclusion}

The SAJ estimator strictly outperforms the CMAJ estimator as it foregoes the latter's auxiliary uniformly distributed random variables that add unnecessary noise. The SAJ and the 2dAJ estimators show comparable performance, but have different strengths and weaknesses. The SAJ estimator has much lower variability in the tail, is easier to implement due to its similarity with known estimators for Markov models, and is computationally far less demanding if the scaling factors do not differ between insured. The 2dAJ estimator is computationally demanding and more difficult to implement, however once implemented it achieves true separation between the contractual payments, including the scaling factor, and the statistical estimation of biometric and behavioral risks. This may be particularly important if the estimates are to find application across insurers, as is commonly the case for actuarial tables.

\section{Extension}\label{sec:extension}

The preceding sections focused on scaled payments with $H$ given by
\begin{align}\label{eq:H_rho}
H(t) = \rho(\tau,Z_{\tau{-}},Z_\tau)^{\mathds{1}_{\{\tau \leq t\}}},
\end{align}
where $\tau$ was interpreted as the exercise time of some policyholder option. This is not the only actuarial example of some importance. If we instead take
\begin{align}\label{eq:H_r}
H(t) = \exp\!\bigg\{-\int_0^t \delta(u) \, \mathrm{d}u\bigg\},
\end{align}
then $B$ given by
\begin{align*}
B(\mathrm{d}t) &= H(t) B^\circ(\mathrm{d}t), &&B(0) = B^\circ(0),
\end{align*}
constitutes the discounted version of $B^\circ$ using the (stochastic) interest rate $\delta$, and $t \mapsto A(t) = \amsmathbb{E}[B(t)]$ becomes the expected accumulated discounted cash flow.

In the following, we explain how the key results of the preceding sections still apply, even if the stochastic process $H$ is only required to be right-continuous, of finite variation, uniformly bounded on compacts, and -- in regards to non-parametric estimation -- adapted to the filtration $\amsmathbb{F}^Z$. In particular, no reference to~\eqref{eq:H_rho} is made. This serves to showcase a broader applicability of our contribution and might, {not least} with~\eqref{eq:H_r} in mind, be of independent interest. {Most contractual payments can be brought into the form $B(\mathrm{d}t) = H(t)B^\circ(\mathrm{d}t)$, so long as only the size and not the `timing' of the sojourn and transition payments is path-dependent. The downside, of course, is that the resulting estimates likely will be contract-specific from the get-go.}

Throughout, we assume that $H$ is adapted to $\amsmathbb{F}$, right-continuous, of finite variation, and uniformly bounded on compacts. We further define
\begin{align}\nonumber
\bar{p}_j(t) &= \amsmathbb{E}[H(t) \mathds{1}_{\{Z_t = j\}}], \\ \label{eq:forward_rate_def1}
\bar{H}_j(\mathrm{d}t) &= \frac{1}{\bar{p}_j(t-)} \amsmathbb{E}[\mathds{1}_{\{Z_{t-} = j\}}H(\mathrm{d}t)], \\ \label{eq:forward_rate_def2}
\bar{\Lambda}_{jk}(\mathrm{d}t) &= \frac{1}{\bar{p}_j(t-)} \amsmathbb{E}[H(t) N_{jk}(\mathrm{d}t)], &&\bar{\Lambda}_{jj} = \bar{H}_j - \sum_{\ell \in \mathcal{J}\atop \ell \neq j} \bar{\Lambda}_{j\ell},
\end{align}
for $j,k \in \mathcal{J}$, $j \neq k$. The following result mirrors Proposition~\ref{prop:rep1d}.
\begin{proposition}\label{prop:rep1d_extended}
It holds that
\begin{align*}
A(t) = \sum_{j\in\mathcal{J}} \bar{p}_j(t-) \bigg({\big(1 + \Delta \bar{H}_j(t)\big)}B_j(\mathrm{d}t) + \sum_{k\in\mathcal{J}\atop k \neq j} b_{jk}(t) \bar{\Lambda}_{jk}(\mathrm{d}t)\bigg)
\end{align*}
and that
\begin{align*}
\frac{
\bar{p}_j(t)
}
{
\amsmathbb{E}[H(0)]
}
&=
\bigg[\Prodi_0^t \big(\emph{Id} + \bar{\Lambda}(\mathrm{d}s)\big)\bigg]_{z_0j}, &&j\in\mathcal{J}.
\end{align*}
\end{proposition}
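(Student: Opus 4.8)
The plan is to mirror the two-part structure of the proof of Proposition~\ref{prop:rep1d}: the cash flow identity is essentially a bookkeeping exercise, while the product-integral identity is the substantive part and rests on the key identity~\eqref{eq:key_identity} together with a finite-variation integration by parts.

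For the first identity, I would start from $A(\mathrm{d}t) = \amsmathbb{E}[H(t) B^\circ(\mathrm{d}t)]$ and split $B^\circ$ into its sojourn and transition parts. The transition part is immediate from the definition~\eqref{eq:forward_rate_def2}, which gives $\amsmathbb{E}[H(t) N_{jk}(\mathrm{d}t)] = \bar{p}_j(t-) \bar{\Lambda}_{jk}(\mathrm{d}t)$. For the sojourn part I would decompose $H(t) = H(t-) + \Delta H(t)$; the first summand yields $\amsmathbb{E}[H(t-)\mathds{1}_{\{Z_{t-}=j\}}] = \bar{p}_j(t-)$ by dominated convergence (using that $H$ is uniformly bounded on compacts), while the jump summand yields $\bar{p}_j(t-)\Delta \bar{H}_j(t)$ by~\eqref{eq:forward_rate_def1}, so that $\amsmathbb{E}[H(t)\mathds{1}_{\{Z_{t-}=j\}}] = \bar{p}_j(t-)(1 + \Delta \bar{H}_j(t))$. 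Collecting terms gives the claimed representation; to display the common factor $1 + \Delta \bar{H}_j(t)$ in front of the transition part as well, one uses that $\Delta \bar{H}_j(t)\,\bar{\Lambda}_{jk}(\mathrm{d}t) = 0$, the natural analogue of the earlier convention $\amsmathbb{P}(\tau = t) B_j(\mathrm{d}t) = 0$ and automatic in the discounting application~\eqref{eq:H_r}, where $H$ is continuous.

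For the product-integral identity I would set $\bar{q}_j(t) = \amsmathbb{E}[H(0)]\,[\Prodi_0^t(\text{Id} + \bar{\Lambda}(\mathrm{d}s))]_{z_0 j}$ and recall that $\bar{q}_j$ is the unique solution of the forward integral equations $\bar{q}_j(\mathrm{d}t) = \sum_{k} \bar{q}_k(t-)\bar{\Lambda}_{kj}(\mathrm{d}t)$ with $\bar{q}_j(0) = \amsmathbb{E}[H(0)]\mathds{1}_{\{z_0=j\}}$. The task is then to show that $\bar{p}_j$ solves the same system. Applying the Stieltjes product rule to the two finite-variation processes $H$ and $\mathds{1}_{\{Z=j\}}$, the covariation term may be absorbed into the second integral to give $H(t)\mathds{1}_{\{Z_t=j\}} = H(0)\mathds{1}_{\{z_0=j\}} + \int_0^t \mathds{1}_{\{Z_{s-}=j\}} H(\mathrm{d}s) + \int_0^t H(s)\,\mathrm{d}\mathds{1}_{\{Z_s=j\}}$. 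Inserting the key identity~\eqref{eq:key_identity} in the form $\mathrm{d}\mathds{1}_{\{Z_s=j\}} = \sum_{k\neq j}(N_{kj}(\mathrm{d}s) - N_{jk}(\mathrm{d}s))$, taking expectations, and invoking~\eqref{eq:forward_rate_def1}--\eqref{eq:forward_rate_def2} yields $\bar{p}_j(\mathrm{d}t) = \bar{p}_j(t-)\bar{H}_j(\mathrm{d}t) + \sum_{k\neq j}(\bar{p}_k(t-)\bar{\Lambda}_{kj}(\mathrm{d}t) - \bar{p}_j(t-)\bar{\Lambda}_{jk}(\mathrm{d}t))$. The diagonal convention $\bar{\Lambda}_{jj} = \bar{H}_j - \sum_{\ell\neq j}\bar{\Lambda}_{j\ell}$ then collapses the right-hand side to $\sum_{k\in\mathcal{J}} \bar{p}_k(t-)\bar{\Lambda}_{kj}(\mathrm{d}t)$, which is exactly the forward system solved by $\bar{q}_j$. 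Since the initial values agree, uniqueness gives $\bar{p}_j = \bar{q}_j$, and dividing through by $\amsmathbb{E}[H(0)]$ gives the claim.

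The main obstacle I anticipate is the integration-by-parts step: one must track the jump/covariation term carefully so that the $H(s)$ versus $H(s-)$ appearing in the two integrals are precisely those used in the definitions~\eqref{eq:forward_rate_def1}--\eqref{eq:forward_rate_def2}, and one must justify interchanging expectation with Stieltjes integration (Fubini, using the uniform bound on $H$ and $\amsmathbb{E}[N_{jk}(t)] < \infty$). A secondary point is that $\bar{\Lambda}$ is not a genuine hazard matrix -- its rows need not sum to zero, since $\bar{H}_j$ replaces the usual diagonal -- so one should note that the product-integral solution theory nonetheless applies because $\bar{\Lambda}$ remains of finite variation, exactly as in Proposition~\ref{prop:rep1d}.
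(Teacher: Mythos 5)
Your proposal is correct and matches the paper's proof essentially step for step: the first identity via the decomposition $H(t)=H(t-)+\Delta H(t)$, giving $\amsmathbb{E}[H(t)\mathds{1}_{\{Z_{t-}=j\}}]=\bar{p}_j(t-)\big(1+\Delta\bar{H}_j(t)\big)$, and the second via integration by parts combined with~\eqref{eq:key_identity} to arrive at precisely the paper's key identity~\eqref{eq:key_identity_generalized}, after which taking expectations, using the diagonal convention $\bar{\Lambda}_{jj}=\bar{H}_j-\sum_{\ell\neq j}\bar{\Lambda}_{j\ell}$ to collapse to the forward system of Remark~\ref{rmk:forward_eq_extended}, and appealing to uniqueness exactly mirrors the argument of Proposition~\ref{prop:rep1d}. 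Your added observation that placing the common factor $1+\Delta\bar{H}_j(t)$ in front of the transition term as well requires $\Delta\bar{H}_j(t)\,b_{jk}(t)\,\bar{\Lambda}_{jk}(\mathrm{d}t)=0$ -- automatic for continuous $H$ as in~\eqref{eq:H_r} -- is a genuine refinement of the paper's ``it then immediately follows'' and is worth retaining.
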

\begin{proof}
Note that
\begin{align*}
\amsmathbb{E}[H(t) \mathds{1}_{\{Z_{t-}=j\}}]
=
\bar{p}_j(t-)
+
\amsmathbb{E}\Big[\big(H(t)-H(t-)\big)\mathds{1}_{\{Z_{t-}=j\}}]
=
\bar{p}_j(t-)\big(1 + \Delta \bar{H}_j(t)\big).
\end{align*}
It then immediately follows that
\begin{align*}
A(t) = \sum_{j\in\mathcal{J}} \bar{p}_j(t-) \bigg({\big(1 + \Delta \bar{H}_j(t)\big)}B_j(\mathrm{d}t) + \sum_{k\in\mathcal{J}\atop k \neq j} b_{jk}(t) \bar{\Lambda}_{jk}(\mathrm{d}t)\bigg),
\end{align*}
which verifies the first statement of the proposition. The proof of the second statement mirrors the proof of Proposition~\ref{prop:rep1d}, but requires the following key identity:
\begin{align}\label{eq:key_identity_generalized}
\begin{split}
H(t) \mathds{1}_{\{Z_t = j\}}
&=
\mathds{1}_{\{j=z_0\}}H(0)
+
\sum_{k \in \mathcal{J}\atop k \neq j} \int_0^t H(s) N_{kj}(\mathrm{d}s) \\
&\quad-
\sum_{k \in \mathcal{J}\atop k \neq j} \int_0^t H(s) N_{jk}(\mathrm{d}s)
+
\int_0^t \mathds{1}_{\{Z_{s-}=j\}} H(\mathrm{d}s),
\end{split}
\end{align}
which follows from~\eqref{eq:key_identity} and an application of integration by parts.
\end{proof}
\begin{remark}\label{rmk:forward_eq_extended}
Another way to phrase the second part of the theorem is that $(\bar{p}_j)_j$ uniquely solves the forward integral equations
\begin{align*}
\bar{p}_j(\mathrm{d}t) &= \sum_{k \in \mathcal{J}} \bar{p}_k(t-) \bar{\Lambda}_{kj}(\mathrm{d}t), &&j \in \mathcal{J},
\end{align*}
with initial conditions $\bar{p}_j(0) = \mathds{1}_{\{j = z_0\}}\amsmathbb{E}[H(0)]$.
\end{remark}
\begin{remark}
The product integral representation of $(\bar{p}_j)_j$, and the equivalent forward integral equations, constitute an extension of previous insights for Markov processes to the non-Markov regime; confer, for instance, with Appendix~1 in~\cite{BuchardtFurrerMoller2020} as well as Theorem~3.1 in~\cite{AhmadBladt2023}.
\end{remark}
\begin{remark}
If $H$ is taken according to~\eqref{eq:H_r}, then the equations~\eqref{eq:forward_rate_def1}--\eqref{eq:forward_rate_def2} yield the correct Ansatz for a proper constructive and simultaneous definition of forward interest and forward transition rates in the spirit of Definition~4.1 in~\cite{Buchardt2017}, with the forward integral equations given in Remark~\ref{rmk:forward_eq_extended} offering the appropriate extension of Theorem~4.4 in~\cite{Buchardt2017}. This addresses an open question that dates back to at least~\cite{MiltersenPersson2005}; confer also, for instance, with~\cite{Norberg2010,Buchardt2014,ChristiansenNiemeyer2015,BuchardtFurrerSteffensen2019}.
\end{remark}
We now turn to non-parametric estimation under the assumption that $H$ is adapted to $\amsmathbb{F}^Z$. We shall not carry out all arguments in complete detail, but just report on the key findings. If $H$ is taken according to for instance~\eqref{eq:H_r}, one could readily ask why assuming $H$ to be adapted to $\amsmathbb{F}^Z$ has merit compared to, say, just requiring $H$ (and therefore $\delta$) to be deterministic. However, state-dependent interest rates may appear when dealing with certain reserve-dependent expenses, see Section~G in~\cite{Norberg1991} for the basic idea as well as~\cite{ChristiansenDenuitDhaene2014} and Section~4 in~\cite{BuchardtFurrerMoller2020} for further details. Therefore, even under the specification~\eqref{eq:H_r}, the case of $\amsmathbb{F}^Z$-adaptedness carries practical value.

As in Section~\ref{sec:num}, we let the right-censoring time $R$ be independent of $Z$ and suppose $((Z_t)_{0\leq t\leq R}, \eta \wedge R)$ is observed. In particular, the process $H$ is sufficiently observed. Due to right-censoring being entirely random, we obtain the key identities
\begin{align*}
\bar{H}_j(\mathrm{d}t) &= \frac{1}{\bar{p}_j^\texttt{c}(t-)} \amsmathbb{E}[\mathds{1}_{\{Z_{t-} = j\}}H(\mathrm{d}t \wedge R)], \\
\bar{\Lambda}_{jk}(\mathrm{d}t) &= \frac{1}{\bar{p}_j^\texttt{c}(t-)} \amsmathbb{E}[H(t) N_{jk}(\mathrm{d}t \wedge R)],
\end{align*}
for $t$ below $\theta$ and where we define $\bar{p}_j^\texttt{c}(t) = \amsmathbb{E}[H(t)\mathds{1}_{\{Z_t = j\}}\mathds{1}_{\{t < R\}}]$, $j\in\mathcal{J}$; the identities may be compared to~\eqref{eq:lambda1d_c}.

Based hereon, we may for an i.i.d.\ sample directly define estimators of $\bar{\Lambda}$ and $\bar{p}$ based on averages of the above quantities, and just like their counterparts in Section~\ref{sec:num}, these estimators will be strongly consistent and asymptotically normal. This is due to the fact that the statements and proofs of Section~\ref{sec:num} only to a very limited degree rely on the specification~\eqref{eq:H_rho} and when they do, such as in~\eqref{eq:key_1dJ0}, the statements and proofs are easily adapted to the general case based on~\eqref{eq:key_identity_generalized}.

\section*{Acknowledgments}

Significant parts of the research presented in this paper were carried out while Christian Furrer was a Junior Fellow at the Hanse-\!Wissenschaftskolleg Institute for Advanced Study in Delmenhorst, Germany. The authors are grateful to Marcus C.\ Christiansen for initiating and facilitating their fruitful research collaboration. Christian Furrer also thanks Martin Bladt for valuable discussions on empirical process theory. 

\appendix 
\bibliographystyle{amsplain}
\bibliography{references.bib}

\end{document}